\newtheorem{Theorem}{Theorem}[section]
\newtheorem{Lemma}{Lemma}[section]
\newtheorem{Proposition}{Proposition}[section]
\theoremstyle{definition}
\newtheorem{Definition}{Definition}[section]
\theoremstyle{remark}
\newtheorem{Remark}{Remark}[section]
\numberwithin{equation}{section}
\renewcommand{\u}{{\bf u}}
\newcommand{\R}{{\mathbb R}}
\newcommand{\Dv}{{\rm div}}
\def\f{\frac}
\renewcommand{\O}{\Omega}
\def\D{\Delta }
\def\hf1{^\f{1}{1-\xi^2}}
\def\be{\begin{equation}}
\def\en{\end{equation}}
\def\bs{\begin{split}}
\def\es{\end{split}}
\renewcommand{\v}{{\bf v}}
\author{Cheng Yu}
\address{Department of Mathematics, University of Pittsburgh,
                           Pittsburgh, PA 15260.}
\email{chy39@pitt.edu}
\title[Global Solutions to the  Navier-Stokes-Vlasov Equations]
{Global Weak Solutions to the Incompressible Navier-Stokes-Vlasov Equations}
\keywords{Weak solution, Vlasov equation, Regularity, Uniqueness}
\subjclass[2000]{75D05, 35J05, 76T20.}
\date{\today}
\begin{document}
\begin{abstract}
In this paper, the system of particles coupled with fluid is considered. The particles are described by a Vlasov equation, and the fluid is governed by the forced incompressible Navier-Stokes equations. The interaction with fluid phase governed by Navier-Stokes equations is taken into account through a source term.
The resulting system, namely Navier-Stokes-Vlasov equations, is shown to have global weak solutions in two and three spatial dimensions, and to have a unique global solution in two spatial dimensions.

\end{abstract}

\maketitle

\section{Introduction}
The fluid-particle flow has received much attention recently. It has a wide range of applications \cite{CP,F,O,RM,S,W} in the modeling of reaction flows of sprays, atmospheric pollution modeling,
chemical engineering or waste water treatment, dust collecting units, and so on. Consequently,
the description of the motion of fluid with charged particles has became a crucial problem. Due to the huge number of particles, it is very hard to calculate each particle's trajectory especially with the addition of the dimension of physical domain. Actually, we are not interested in the precise locations of each particle but the average behavior of collection of particles. For that purpose, we use kinetic method to describe the effect of particles on the fluid. On the other hand, macroscopic models such as hydrodynamic ones are commonly used in fluid. The interaction with fluid phase governed by fluid equations is taken into account through a force term. In this paper, we study a coupled system consisting of the Navier-Stokes equations of fluid dynamics and the
Vlasov equation of the cloud of particles, namely, incompressible Navier-Stokes-Vlasov equations. We refer to \cite{AKS,BDGM,CP,F,O,RM,S,W} for some physical introduction
to Navier-Stokes-Vlasov equations and the related applications.

The objective of this paper is to establish the global weak solutions for the following partial differential equations
\begin{equation}
\label{1.1}
\begin{split}
&\u_t+(\u\cdot\nabla)\u+\nabla p-\mu\D\u=-\int_{\R^d}(\u-\v)f\,d\v,
\\&\Dv\u=0,
\\&f_t+\v\cdot\nabla_xf+\Dv_{\v}((\u-\v)f)=0,
\end{split}
\end{equation}
in $\O\times\R^d\times(0,T),$ where $\O\subset\R^d$ is a bounded domain, $d=2, 3$,
 $\u$ is the velocity of the fluid, $p$ is the pressure, $\mu$ is the kinematic viscosity of the fluid.
Without loss of generality,
we take $\mu=1$ throughout the paper. A distribution $f(t,x,\v)$
depends on the time $t\in[0,T]$, the physical position $x\in\O$ and the velocity of particle $\v\in\R^d$. The number of particles enclosed at $t\geq 0$ and location $x\in\O$ in the volume element $d\v$ is given by $f(t,x,\v)\,d\v$.

The system is completed by the initial data
\begin{equation}
\label{1.2}
\u(0,x)=\u_0(x),\quad f(0,x,\v)=f_0(x,\v),
\end{equation}
and with the following boundary conditions:

\begin{equation}
\label{boundary condition}
\u=0\;\;\text{on}\;\;\partial\O,\;\;\;\text{ and }\;\;\;
f(t,x,\v)=f(t,x,\v^{*})\;\; \text {for } x\in\partial\O,\;\v\cdot\nu(x)<0
\end{equation}
where $\v^{*}=\v-2(\v \cdot\nu(x)) \nu(x)$ is the specular velocity, $\nu(x)$ is the outward normal to $\O$.

When the distribution function $f$ is absent, \eqref{1.1} reduces to the incompressible Navier-Stokes equations. For the incompressible Navier-Stokes equations, we refer to the books \cite{CF,DG,L,T} for fundamental ideas on
 the global weak solutions, stability, and limitations of the current tools. Meanwhile, there has been extensive mathematical study on the Vlasov equation and the related problems, for example, see \cite{CH,JN}.

The mathematical analysis of the coupled systems is far from being complete but recently has received much attention. Early work \cite{H} established the global existence results for Stokes-Vlasov system in a bounded domain. The existence theorem for weak solutions has been extended in \cite{BDGM}, where the author did not neglect the convection term and considered the Navier-Stokes-Vlasov equations within a periodic domain. The global existence of smooth solutions with small data for Navier-Stokes-Vlasov-Fokker-Planck equations was obtained in \cite{GHMZ}.
More Recently, the existence of global weak solutions of Navier-Stokes-Vlasov-Poisson system with corresponding boundary value problem was obtained in \cite{AKS}. Meanwhile, there are considerable works in the direction of hydrodynamic limits, we refer the reader to \cite{CP,GJV,GJV2,MV2}. In works \cite{CP,GJV,GJV2,MV2}, the authors used some scaling issues and convergence methods, such as compactness and relative entropy method, to investigate the hydrodynamic limits. A key idea in \cite{GJV,GJV2} is to have the dissipation of a certain free energy associated to the whole space, and control its dissipation rate. For the compressible version,
local strong solutions of Euler-Vlasov equations was established in \cite{BD}.
Global existence of weak solutions for compressible Navier-Stokes equations coupled to Vlasov-Fokker-Planck equations was established in \cite{MV}.

There is no existence theory  available for the Navier-Stokes-Vlasov equations in a bounded domain. Two different variable sets in the coupling term present new challenges to the existence theory. The other major difficulty arises from the Vlasov equations, such as the compactness properties of the distribution function $f(t,x,\v).$ The objective of this work is to establish the global existence of weak solutions with large data in the three dimensional spaces, and to establish the uniqueness in the two dimensional space.

In this paper we shall study the initial-boundary value problem \eqref{1.1}-\eqref{boundary condition} for Navier-Stokes-Vlasov equations and establish the global existence and uniqueness of weak solutions with large data in certain functional spaces.  Motivated by the works of \cite{DG,M,MV}, we propose a new approximation scheme. The key point of approximation is to control the modified force term of regularized Navier-Stokes equations.
 The existence and uniqueness of the modified Vlasov equation is classically obtained, for example, see \cite{BP,DL,H}.
  The controls of $\int_{\R^d}f d\v$ and $\int_{\R^d}\v f\,d\v$ ensure that the modified force term of the Navier-Stokes equations has enough regularity.  Thus, we can solve the regularized Navier-Stokes equations. The compactness properties of the system will allow us to pass the limit to recover the original system. We shall also establish the uniqueness of the weak solutions in the two dimensional space.

The rest of the paper is organized as follows. In Section 2, we deduce a priori estimates for \eqref{1.1}-\eqref{boundary condition}, give the definition of weak solutions, and state our main results. In Section 3, we construct an approximation scheme to \eqref{1.1}-\eqref{boundary condition}, establish its global existence. In Section 4, we establish the uniqueness for the global weak solutions in the two dimensional space.
\bigskip

%%%%%%%%%%%%%%%%%%%%%%%%%%

\section{A Priori Estimates and Main Results}
Here we define the energy functional of the particles density:
\begin{equation*}
F(f):=\int_{\O}\int_{\R^d}f(1+|\v|^2)\,d\v\, dx.
\end{equation*}
If $\u=\text{constant}$, $F(f)$ is an energy functional to the third equation in \eqref{1.1}. When $\u\neq\text{constant}$, we will have the following energy inequality, more precisely:
\begin{Lemma}
\label{L1} The system \eqref{1.1} has an energy functional:
\begin{equation*}
E(t):=\left(\int_{\O}\frac{1}{2}|\u|^2\,dx+F(f)\right)(t).
\end{equation*}
If $d=2,3$ and $(\u,f)$ is a smooth solution to system \eqref{1.1} such that
\begin{equation}
\label{re1}
\u\in L^{\infty}(0,T;L^2(\O))\cap L^{2}(0,T;H^{1}_0(\O));
\end{equation}
\begin{equation}
\label{re2}
f(1+|\v|^2)\in L^{\infty}(0,T;L^{1}(\O\times\R^d)),
\end{equation}
then, for all $t<T,$ we have:
\begin{equation}
\label{2.1}
\frac{d}{dt}E(t)=-\left(\int_{\O}|\nabla\u|^2\,dx+\int_{\O}\int_{\R^d}f|\u-\v|^2\,d\v\, dx\right)\leq 0.
\end{equation}
\end{Lemma}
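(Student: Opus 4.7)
The plan is the standard energy-method computation: take the $L^2$ inner product of each evolution equation against its natural test function, integrate by parts, and carefully track the boundary contributions.

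First I would take the $L^{2}(\O)$-inner product of the momentum equation with $\u$. Using $\Dv\u=0$ together with the no-slip condition $\u|_{\partial\O}=0$, the pressure term and the convective term $\int_{\O}(\u\cdot\nabla)\u\cdot\u\,dx$ drop out after one integration by parts, the viscous term yields $\int_{\O}|\nabla\u|^2\,dx$, and the right-hand side contributes $-\int_{\O}\int_{\R^d}\u\cdot(\u-\v)f\,d\v\,dx$. For the kinetic part I would multiply the Vlasov equation by the weight $1+|\v|^2$ and integrate over $\O\times\R^d$. The $\v$-divergence term $\Dv_{\v}((\u-\v)f)$ is handled by integration by parts in $\v$, with vanishing contribution at $|\v|=\infty$ provided by the integrability in \eqref{re2}; this produces the coupling contribution $-2\int_{\O}\int_{\R^d}\v\cdot(\u-\v)f\,d\v\,dx$. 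The transport term $\v\cdot\nabla_{x}f$ integrates by parts in $x$ (the weight $1+|\v|^2$ is $x$-independent, so no interior commutator appears) to the boundary flux
\begin{equation*}
\int_{\partial\O}\!\!\int_{\R^d}(1+|\v|^2)(\v\cdot\nu(x))\,f\,d\v\,dS.
\end{equation*}

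The heart of the proof, and the only genuinely non-routine step, is to show that this flux vanishes under the specular boundary condition \eqref{boundary condition}. I would split the inner integral at $\{\v\cdot\nu=0\}$ into incoming $\{\v\cdot\nu<0\}$ and outgoing $\{\v\cdot\nu>0\}$ halves, and apply the change of variable $\v\mapsto\v^{*}=\v-2(\v\cdot\nu)\nu$ to the incoming piece. The involution $\v\mapsto\v^{*}$ preserves Lebesgue measure on $\R^d$ and preserves $|\v|$ while flipping the sign of $\v\cdot\nu$; combined with $f(t,x,\v)=f(t,x,\v^{*})$ on $\partial\O$, the two pieces cancel exactly.

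With the spatial flux gone, adding the fluid identity to the kinetic identity and collecting the two coupling terms yields, after a short algebraic manipulation, the drag dissipation $-\int_{\O}\int_{\R^d}|\u-\v|^2 f\,d\v\,dx$, which together with $-\int_{\O}|\nabla\u|^2\,dx$ gives \eqref{2.1}. Non-negativity of both dissipation terms on the right-hand side is then immediate. Under the smoothness assumed for $(\u,f)$ and the regularity \eqref{re1}--\eqref{re2}, every manipulation besides the specular cancellation is a routine integration by parts, so the boundary step is the one place where the structure of the problem is genuinely used.
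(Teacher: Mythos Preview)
Your approach is exactly the paper's: test the momentum equation against $\u$, test the Vlasov equation against the kinetic weight, and add. Two small remarks. First, the paper multiplies the Vlasov equation by $1+\tfrac{1}{2}|\v|^2$, not $1+|\v|^2$; with your weight the coupling terms do not collapse cleanly to $-\int f|\u-\v|^2$ (you are left with an extra $\int f(\u-\v)\cdot\v$), whereas with $\tfrac{1}{2}|\v|^2$ the gradient in $\v$ is $\v$ and the sum $-\int f(\u-\v)\cdot\u+\int f(\u-\v)\cdot\v$ gives exactly $-\int f|\u-\v|^2$. (This also means the stated $F(f)$ in the lemma carries a harmless factor-of-two inconsistency.) Second, your explicit treatment of the boundary flux via the specular involution $\v\mapsto\v^{*}$ is more detailed than the paper, which simply writes ``integrating by parts'' and does not display the cancellation; your argument there is correct and is the right justification.
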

\begin{proof}
Multiplying by $\u$ the both sides of the first equation in \eqref{1.1}, and integrating over $\O$ and by parts,
we have
\begin{equation}
\label{2.2}
\frac{d}{dt}\int_{\O}\frac{1}{2}|\u|^2\,dx+\int_{\O}|\nabla\u|^2\,dx=-\int_{\O}\int_{\R^d}f(\u-\v)\cdot\u\, d\v\, dx.
\end{equation}
Multiplying by $(1+\frac{1}{2}|\v|^2)$ the both sides of the third equation in \eqref{1.1},  integrating over $\O$, and using integration by parts, one obtains that
\begin{equation}
\label{2.3}
\frac{d}{dt}F(f)(t)+\int_{\O}\int_{\R^d}f|\u-\v|^2\,d\v \,dx=\int_{\O}\int_{\R^d}f(\u-\v)\cdot\u\, d\v\, dx.
\end{equation}
Using \eqref{2.2}-\eqref{2.3}, one obtains \eqref{2.1}, and \eqref{re1}-\eqref{re2} are the consequences of \eqref{2.1}.

The proof is complete.

\end{proof}
In what follows, we denote $$m_kf=\int_{\R^d}|\v|^kf\, d\v,\;\;\text{ and } \;\;M_kf=\int_{\O}\int_{\R^d}|\v|^kf\,d\v dx.$$
Clearly,
\begin{equation*}
M_kf=\int_{\O}m_kf\,dx
\end{equation*}
Here we state the following lemmas which are due to \cite{H}:
\begin{Lemma}
\label{L2}
Suppose that $(\u,f)$ is a smooth solution to
\eqref{1.1}. If $f_0\in L^{p}$ for any $p>1$, we have
\begin{equation*}
\|f(t,x;\v)\|_{L^p}\leq e^{dT}\|f_0\|_{L^{p}}, \text{ for any } t\geq0;
\end{equation*}
and if $|\v|^kf_0\in L^{1}(\O\times\R^d),$ then we have
\begin{equation*}
\int_{\O\times\R^d}|\v|^kfd\v dx\leq C(d,T)\left(\left(\int_{\O\times\R^d}|\v|^kf_0\,d\v dx\right)^{\frac{1}{d+k}}+(||f_0||_{L^{\infty}}+1)\|\u\|_{L^{r}(0,T;L^{d+k})}\right)^{d+k}
\end{equation*}
for all $o\leq t\leq T$ where the constant $C(d,T)>0$ depends only on $d$ and $T$.
\end{Lemma}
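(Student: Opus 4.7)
The plan is to rewrite the Vlasov equation in non-conservative form. Since $\u$ is independent of $\v$, one has $\Dv_{\v}((\u-\v)f) = (\u-\v)\cdot\na_{\v} f - df$, so the third equation in \eqref{1.1} becomes $f_t + \v\cdot\na_x f + (\u-\v)\cdot\na_{\v} f = df$. Multiplying by $p f^{p-1}$ and integrating over $\O\times\R^d$ produces two boundary-like terms to be handled: the $x$-divergence $\int_{\O}\v\cdot\na_x(f^p)\,dx$ reduces to $\int_{\partial\O}(\v\cdot\nu)f^p\,dS$, and after integrating in $\v$ the specular reflection condition in \eqref{boundary condition} makes this vanish (the change of variables $\v\mapsto\v^*$ on $\{\v\cdot\nu<0\}$ flips the sign of $\v\cdot\nu$ while preserving $f$ and $|\v|$). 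The $\v$-divergence, integrated by parts, contributes exactly $d\int f^p\,d\v$. The result is the linear ODE $\f{d}{dt}\|f\|_{L^p}^p = d(p-1)\|f\|_{L^p}^p$, giving $\|f(t)\|_{L^p}\le e^{d(p-1)t/p}\|f_0\|_{L^p}\le e^{dT}\|f_0\|_{L^p}$; letting $p\to\infty$ then yields the $L^{\infty}$ version, which will be needed in the second part.

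\textbf{Strategy for the moment bound.} I would multiply the Vlasov equation by $|\v|^k$ and integrate. The spatial transport term $\int_{\O}\Dv_x(\v|\v|^k f)\,dx$ again vanishes after integrating in $\v$ by the specular symmetry ($|\v^*|=|\v|$). Integrating the velocity flux by parts gives
\be
\label{momeq}
\f{d}{dt}M_k f + k M_k f = k\int_{\O}\int_{\R^d}|\v|^{k-2}\v\cd\u\, f\,d\v\,dx,
\en
and the right-hand side is controlled by $k\int_{\O}|\u|\,m_{k-1}f\,dx$.

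\textbf{Interpolation and ODE.} For a non-negative density the standard truncation argument (splitting the $\v$-integral at $|\v|=R$ and optimizing in $R$) yields
\[
m_{k-1}f \le C\,\|f\|_{L^{\infty}}^{1/(d+k)}\bigl(m_k f\bigr)^{(d+k-1)/(d+k)}.
\]
Applying H\"older in $x$ with conjugate exponents $(d+k)$ and $(d+k)/(d+k-1)$ gives
\[
\f{d}{dt}M_k f \le C(d,k)\,\|f\|_{L^{\infty}}^{1/(d+k)}\,\|\u\|_{L^{d+k}(\O)}\,(M_k f)^{(d+k-1)/(d+k)},
\]
after dropping the favorable term $-kM_k f$. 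Setting $y=M_k f$ this is $y'\le C\,g(t)\,y^{\a}$ with $\a=(d+k-1)/(d+k)$, which integrates to
\[
y(t)^{1/(d+k)}\le y(0)^{1/(d+k)} + \f{C}{d+k}\int_0^t g(s)\,ds.
\]
Using the $L^{\infty}$ bound $\|f(s)\|_{L^{\infty}}\le e^{dT}\|f_0\|_{L^{\infty}}$ from the first part, H\"older in time to pass from $\int_0^T\|\u\|_{L^{d+k}}\,ds$ to $\|\u\|_{L^{r}(0,T;L^{d+k})}$, and the crude inequality $\|f_0\|_{L^{\infty}}^{1/(d+k)}\le \|f_0\|_{L^{\infty}}+1$, one absorbs all $T$-dependent factors into $C(d,T)$ and raises to the $(d+k)$-th power to obtain exactly the claimed estimate.

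\textbf{Main obstacle.} The only delicate steps are: (i) verifying rigorously that the specular boundary term drops out for both the $L^p$ and the moment calculations (this is where the precise form of the boundary condition is essential, and a change of variables $\v\mapsto\v^*$ on $\{\v\cdot\nu<0\}$ is needed); and (ii) selecting the interpolation exponent $(d+k-1)/(d+k)$ so that the subsequent H\"older pairing in $x$ produces the norm $\|\u\|_{L^{d+k}}$, which is exactly the norm available from Sobolev embedding in dimension $d$ when $\u\in L^{2}(0,T;H^{1}_0)\cap L^{\infty}(0,T;L^2)$. Once the exponents are chosen correctly, the remaining work is a routine separation-of-variables integration of a sub-linear ODE.
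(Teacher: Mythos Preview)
Your argument is correct and is precisely the standard characteristic/moment method. Note, however, that the paper does not supply its own proof of this lemma: it simply states that Lemmas~\ref{L2} and~\ref{L3} ``are due to \cite{H}'' (Hamdache), so there is no in-paper proof to compare against. Your derivation---non-conservative rewriting for the $L^p$ bound, multiplication by $|\v|^k$ for the moment identity, the interpolation $m_{k-1}f\le C\|f\|_{L^\infty}^{1/(d+k)}(m_kf)^{(d+k-1)/(d+k)}$, H\"older in $x$ with exponent $d+k$, and integration of the resulting sublinear ODE---is exactly the approach in the cited reference, and all steps (including the cancellation of the boundary terms under specular reflection via $\v\mapsto\v^*$) are handled correctly.
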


\begin{Lemma}
\label{L3}
Under hypotheses of Lemma \ref{L2} and $d=3$, the density $m_0f$ and the mean velocity $m_1f$ have the following estimates for all $0\leq t\leq T,$
\begin{equation*}
\begin{split}
&\|m_0f\|_{L^2(\O)}\leq C(1+\|f_0\|_{L^{\infty}(0,T;L^{\infty}(\O\times\R^3)})A^3
\\&\|m_1f\|_{L^{3/2}(\O)}\leq C (1+\|f_0\|_{L^{\infty}(0,T;L^{\infty}(\O\times\R^3)})A^{4},
\end{split}
\end{equation*}
where $A=(\int_{\O}\int_{\R^3}|\v|^{3}f_0\,dx d\v)^{\frac{1}{6}}+(\|f_0\|_{L^{\infty}(0,T;L^{\infty}(\O\times\R^3))}+1)\|\u\|_{L^2(0,T;L^{6}(\O))}.$
\end{Lemma}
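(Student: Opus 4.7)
The plan is to derive the two $L^p$ bounds by a standard truncation-and-optimization in the velocity variable, and then feed the resulting $L^p$-in-$x$ bounds into Lemma \ref{L2} with $k=3$, $d=3$ (so $d+k=6$, $r=2$).

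First I would fix $(t,x)$ and split the velocity integral at radius $R>0$. For the zeroth moment,
\begin{equation*}
m_0f(t,x)=\int_{|\v|\le R}f\,d\v+\int_{|\v|>R}f\,d\v
\le C\|f\|_{L^\infty}R^3+R^{-3}m_3f(t,x),
\end{equation*}
and optimizing in $R$ (choosing $R^6\sim m_3f/\|f\|_{L^\infty}$) gives the pointwise interpolation
\begin{equation*}
m_0f(t,x)\le C\|f\|_{L^\infty}^{1/2}\bigl(m_3f(t,x)\bigr)^{1/2}.
\end{equation*}
An analogous splitting for the first moment, with $|\v|\le|\v|^3/R^2$ on $\{|\v|>R\}$, yields $m_1f\le C(\|f\|_{L^\infty}R^4+R^{-2}m_3f)$, and optimizing in $R$ gives
\begin{equation*}
m_1f(t,x)\le C\|f\|_{L^\infty}^{1/3}\bigl(m_3f(t,x)\bigr)^{2/3}.
\end{equation*}

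Next I would square the first pointwise bound and integrate in $x\in\O$ to obtain
\begin{equation*}
\|m_0f\|_{L^2(\O)}^2\le C\|f\|_{L^\infty}\int_\O m_3f\,dx=C\|f\|_{L^\infty}\,M_3f,
\end{equation*}
and raise the second bound to the power $3/2$ and integrate to obtain
\begin{equation*}
\|m_1f\|_{L^{3/2}(\O)}^{3/2}\le C\|f\|_{L^\infty}^{1/2}\,M_3f.
\end{equation*}
Thus $\|m_0f\|_{L^2(\O)}\le C\|f\|_{L^\infty}^{1/2}(M_3f)^{1/2}$ and $\|m_1f\|_{L^{3/2}(\O)}\le C\|f\|_{L^\infty}^{1/3}(M_3f)^{2/3}$.

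To close the estimates I would apply Lemma \ref{L2}: its first part gives $\|f(t,\cdot,\cdot)\|_{L^\infty}\le e^{3T}\|f_0\|_{L^\infty}$, and its second part, with $k=3$, $d=3$, $d+k=6$ and the corresponding exponent $r=2$, gives
\begin{equation*}
M_3f\le C(d,T)\,A^{6},\qquad A=\Bigl(\int_{\O}\int_{\R^{3}}|\v|^{3}f_{0}\,dxd\v\Bigr)^{1/6}+\bigl(\|f_{0}\|_{L^\infty}+1\bigr)\|\u\|_{L^{2}(0,T;L^{6}(\O))}.
\end{equation*}
Substituting these bounds and absorbing the half-power of $\|f_0\|_{L^\infty}$ into the factor $1+\|f_0\|_{L^\infty}$ produces the desired inequalities with $A^3$ and $A^{4}$ on the right.

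I do not expect a serious obstacle: the argument is essentially the classical velocity-truncation interpolation used in the kinetic theory literature, and the hypotheses of Lemma \ref{L2} provide exactly the $M_3f$ control the optimization needs. The only point requiring care is the bookkeeping that the exponent on $\|f_0\|_{L^\infty}$ coming from the interpolation step (namely $1/2$ for $m_0f$ and $1/3$ for $m_1f$) is subsumed by the stated factor $1+\|f_0\|_{L^\infty}$, and that the choice $r=2$ in Lemma \ref{L2} matches the natural energy regularity $\u\in L^2(0,T;H^1_0)\hookrightarrow L^2(0,T;L^6)$ afforded by Lemma \ref{L1} in three dimensions.
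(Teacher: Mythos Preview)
Your argument is correct and is exactly the standard velocity-truncation interpolation that underlies this lemma. The paper does not give its own proof of Lemma~\ref{L3}; it simply attributes the result to Hamdache~\cite{H}. The method you outline---splitting the $\v$-integral at radius $R$, optimizing in $R$ to interpolate $m_0f$ and $m_1f$ pointwise against $m_3f$ and $\|f\|_{L^\infty}$, then integrating in $x$ and closing with Lemma~\ref{L2}---is precisely the argument from that reference, and the paper itself reproduces this computation explicitly for the two-dimensional analogue in Section~4.1 (the estimates leading to \eqref{5.5}--\eqref{5.6}). Your bookkeeping on the exponents ($(M_3f)^{1/2}\le CA^3$ for $m_0f$, $(M_3f)^{2/3}\le CA^4$ for $m_1f$, and the absorption of $\|f_0\|_{L^\infty}^{1/2}$, $\|f_0\|_{L^\infty}^{1/3}$ into $1+\|f_0\|_{L^\infty}$) is accurate.
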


\begin{Remark}
Similar estimates hold in the two dimensional space.
\end{Remark}

Our main result reads as follows.
\begin{Theorem}
\label{T1} For $d=2,3$ there is a weak solution $(\u,f)$ to the system \eqref{1.1} with the initial data \eqref{1.2} and boundary condition \eqref{boundary condition} for any $T>0.$
\end{Theorem}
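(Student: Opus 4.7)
The plan is to construct a decoupled, regularized approximation scheme in which the Vlasov equation and the Navier-Stokes equation are solved in alternation, derive uniform bounds from the energy identity of Lemma~\ref{L1} together with the moment estimates of Lemmas~\ref{L2}--\ref{L3}, and then pass to the limit via Aubin-Lions compactness for the fluid and a velocity-averaging argument for the kinetic density.

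\emph{Approximation scheme.} I would fix a spatial mollifier $\eta_\varepsilon$ and iterate as follows. Given a smooth divergence-free field $\u^n$, first solve the linear Vlasov equation
\[f^{n+1}_t+\v\cd\na_x f^{n+1}+\Dv_\v\bigl((\u^n-\v)f^{n+1}\bigr)=0\]
with datum $f_0$ and specular reflection on $\partial\O$; classical characteristics theory provides a unique non-negative solution. Using the moments
\[\rho^{n+1}:=\int_{\R^d}f^{n+1}\,d\v,\qquad j^{n+1}:=\int_{\R^d}\v f^{n+1}\,d\v,\]
form the smoothed source $F^{n+1}_\varepsilon:=\eta_\varepsilon*(j^{n+1}-\rho^{n+1}\u^n)$ and solve
\[\u^{n+1}_t+(\u^{n+1}\cd\na)\u^{n+1}-\D\u^{n+1}+\na p^{n+1}=F^{n+1}_\varepsilon,\qquad\Dv\u^{n+1}=0\]
by Galerkin. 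A short-time fixed-point argument, closed globally by the energy bound of Lemma~\ref{L1}, produces an iterate $(\u^n,f^n)$ on any finite $[0,T]$.

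\emph{Uniform estimates and compactness.} The energy identity propagates through the scheme, giving
\[\u^n\in L^\infty(0,T;L^2(\O))\cap L^2(0,T;H^1_0(\O)),\qquad f^n(1+|\v|^2)\in L^\infty(0,T;L^1(\O\times\R^d)).\]
Lemma~\ref{L2} propagates the $L^p$ regularity of $f^n$, and combined with the Sobolev embedding $H^1_0\hookrightarrow L^6$ in three dimensions, Lemma~\ref{L3} yields uniform control of $\rho^n\in L^\infty(0,T;L^2(\O))$ and $j^n\in L^\infty(0,T;L^{3/2}(\O))$, which is precisely what makes $F^{n+1}_\varepsilon$ admissible as a Navier-Stokes forcing. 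Bounds on $\partial_t\u^n$ in a negative-order Sobolev space then give, via Aubin-Lions, strong convergence $\u^n\to\u$ in $L^2(0,T;L^q(\O))$ for every $q<6$; a velocity-averaging lemma upgrades the weak-$\ast$ convergence of $f^n$ to strong convergence of the moments $\rho^n,j^n$. Together these allow one to pass to the limit in the quadratic drag $(\u^n-\v)f^n$, after which sending $\varepsilon\to 0$ removes the mollification and recovers a weak solution on $[0,T]$. Since $T$ was arbitrary, the solution is global.

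\emph{Main obstacle.} I expect the delicate step to be the kinetic compactness on a bounded domain with specular reflection: standard velocity-averaging theorems are typically formulated on the whole space, and the drift $\u^n-\v$ is only weakly convergent in $x$ with limited Sobolev regularity. A workable remedy is to extend $f^n$ by reflection across $\partial\O$ into a tubular neighbourhood so that the extended density satisfies a transport equation where whole-space averaging applies, then use the uniform moment bounds of Lemma~\ref{L3} to justify the non-linear terms after extension. Identifying the weak limit of $f^n$ as a renormalized solution of the original Vlasov equation driven by the limit $\u$ then follows from DiPerna-Lions theory.
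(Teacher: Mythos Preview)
Your overall architecture---approximate, derive uniform bounds from the energy and moment lemmas, extract compactness via Aubin--Lions for $\u$ and velocity averaging for $f$---matches the paper.  The gap is the sentence ``the energy identity propagates through the scheme,'' which is precisely the point the paper's construction is engineered to secure and which your scheme, as written, does not.

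The inequality of Lemma~\ref{L1} relies on an exact cancellation: testing Navier--Stokes by $\u$ produces $-\int f(\u-\v)\cdot\u$, while testing Vlasov by $1+\tfrac12|\v|^2$ produces $+\int f(\u-\v)\cdot\u-\int f|\u-\v|^2$.  Your scheme breaks this twice.  The iteration mismatches the fields: the drag and the Vlasov drift carry $\u^n$, but the Navier--Stokes equation is tested by $\u^{n+1}$, so one side gives $\int f^{n+1}(\u^n-\v)\cdot\u^{n+1}$ and the other $\int f^{n+1}(\u^n-\v)\cdot\u^n$.  Even at a fixed point $\u^n=\u^{n+1}=\u$, the mollifier spoils the pairing: the Navier--Stokes contribution becomes $\int(\rho\u-j)\cdot(\eta_\varepsilon*\u)$ instead of $\int(\rho\u-j)\cdot\u$.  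Without the cancellation you have no $n$- and $\varepsilon$-uniform bound on $\u^{n+1}$ in $L^\infty_tL^2_x\cap L^2_tH^1_x$; the moment bounds of Lemma~\ref{L2} then fail to close, since they feed on $\|\u^n\|_{L^r_tL^{d+k}_x}$, and the ``closed globally by the energy bound'' step collapses.

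The paper resolves this not with a mollifier but with a velocity cut-off $\chi_\lambda(\u)=\u\,1_{\{|\u|\le\lambda\}}$ applied \emph{symmetrically}: the Vlasov drift is $\chi_\lambda(\u)-\v$ and the Navier--Stokes drag is $-\int(\u-\v)f\,d\v\,1_{\{|\u|\le\lambda\}}$.  Because $\u\,1_{\{|\u|\le\lambda\}}=\chi_\lambda(\u)$, the cross terms cancel exactly and one obtains a closed energy inequality with dissipation $\int f|\chi_\lambda(\u)-\v|^2$, uniform in both approximation parameters.  The coupling is then solved by a Schauder fixed point in $L^2((0,T)\times\O)$ (not Picard iteration), so the field appearing in both equations is literally the same $\u_m$.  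If you prefer a mollifier, it must be placed symmetrically---e.g.\ use $\eta_\varepsilon*\u$ as the Vlasov drift and test Navier--Stokes against $\eta_\varepsilon*\u$---so that the adjoint of the convolution restores the cancellation; as stated, your scheme lacks this structure, and that is the missing idea.
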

The weak solution to system \eqref{1.1}-\eqref{boundary condition} is defined as follows.
\begin{Definition}
A pair $(\u,f)$ is called a weak solution to the system \eqref{1.1}-\eqref{boundary condition} in the sense of distribution if
\begin{itemize}%\addtolength{\itemsep}{-0.2\baselineskip}
\item  $\u\in L^{\infty}(0,T;L^{2})\cap L^{2}(0,T;H_{0}^{1})$;
\item $f(t,x,\v)\geq 0, \text{ for any } (t,x,\v)\in (0,T)\times\O\times\R^d$;
\item $f\in L^{\infty}(0,T;L^{\infty}(\O\times\R^d)\cap L^{1}(\O\times\R^d))$;
\item $f|\v|^2\in L^{\infty}(0,T;L^{1}(\O\times\R^d))$;
\item for all $\varphi \in C^{\infty}([0,\infty)\times\O)$ with $\Dv\varphi=0$ we have
\begin{equation*}
\begin{split}
\int_0^{\infty}\int_{\O}(-\u\varphi_t&+\u\cdot\nabla\u\varphi+\nabla\u\cdot\nabla\varphi)\,dx\,dt\\
&=-\int_0^{\infty}\int_{\O\times\R^d}f(\u-\v)\cdot\varphi \,dx\,d\v\, ds+\int_{\O}\u_0\cdot\varphi(0,x)\,dx;
\end{split}
\end{equation*}

\item for all $\phi \in C^{\infty}([0,\infty)\times\O\times\R^d)$ with compact support in $\v$, such that $\phi(T,x,\v)=0,$ we have
\begin{equation*}
\begin{split}
&-\int_0^{T}\int_{\O\times\R^d}f(\phi_t+\v\cdot\nabla_x\phi+(\u-\v)\cdot\nabla_\v\phi)\,dx\,d\v\, dt
\\&=\int_{\O\times\R^d}f_0\phi(0,x,\v)\,dx\, d\v.
\end{split}
\end{equation*}
\item the energy inequality
\begin{equation*}
\begin{split}
&\int_{\O}\frac{1}{2}|\u|^2\;dx+\int_{\O}\int_{\R^d}f(1+|\v|^2)\;dx d\v
\\&+\int_{0}^T\int_{\O}|\nabla\u|^2\,dxdt+\int_{0}^{T}\int_{\O\times\R^d}f|\u-\v|^2\,d\v dx dt
\\&\leq \int_{\O}\frac{1}{2}|\u_0|^2\;dx+\int_{\O}\int_{\R^d}f_0(1+|\v|^2)\;dx
\end{split}
\end{equation*}
holds for $t\in[0,T]$ a.e.
 \end{itemize}
\end{Definition}
In the two dimensional space, we can obtain more regularity and the uniqueness of global weak solution. More precisely, we have:
\begin{Theorem}\label{T2}
 If $\u_0\in H^{1}(\O)$, $f_0\in L^{\infty}(\R^2)\cap L^1(\R^2)$, and $ \int_{\R^2}|\v|^6f_0\,d\v<\infty$, there exists a unique global solution $(\u,f)$ to the system \eqref{1.1} with the initial data \eqref{1.2} and boundary condition \eqref{boundary condition}, such that
 \begin{equation*}
 \begin{split}
&\quad\quad\quad \u \in L^{2}(0,T;H^2_0(\O))\cap C([0,T];H^1_0(\O)),\quad \frac{\partial \u}{\partial t}\in L^{2}((0,T)\times\O),
 \\& \;\;\; \;\;\;\;\;\;\;f\in C([0,T];L^{\infty}(\O\times\R^2))
 \end{split}
 \end{equation*}
 for any $T>0.$
 \end{Theorem}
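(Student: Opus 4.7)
The plan is to first upgrade the weak solution produced by Theorem \ref{T1} to the regularity class claimed, then prove uniqueness within that class. Both steps rely on two-dimensional-specific tools, especially the Ladyzhenskaya inequality $\|w\|_{L^4}^2\le C\|w\|_{L^2}\|\na w\|_{L^2}$ and the classical 2D strong-solution theory for Navier-Stokes.

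For the higher regularity of $\u$, I would test the first equation in \eqref{1.1} against $\u_t$, obtaining
\begin{equation*}
\int_\O|\u_t|^2\,dx+\f12\f{d}{dt}\int_\O|\na\u|^2\,dx \le |\langle(\u\cd\na)\u,\u_t\rangle|+|\langle F,\u_t\rangle|,
\end{equation*}
with $F=m_1 f-\u\,m_0 f$. The convective term is handled by the classical 2D Ladyzhenskaya estimate together with the bound $\na\u\in L^2_tL^2_x$ from Lemma \ref{L1}. For the source, the hypothesis $|\v|^6 f_0\in L^1$ combined with Lemma \ref{L2} (applied in $d=2$) propagates $M_6 f$ in time; together with the elementary moment interpolation $m_k f\le C\|f\|_{L^\infty}^{(6-k)/8}(m_6 f)^{(2+k)/8}$, this places $m_0 f\in L^\infty_t L^4_x$ and $m_1 f\in L^\infty_t L^{8/3}_x$, enough to put $F\in L^2((0,T)\times\O)$ and close the Gronwall estimate. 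Applying elliptic regularity to $-\D\u=\u_t+(\u\cd\na)\u+F-\na p$ at fixed $t$ upgrades the bound to $\u\in L^\infty_t H^1_0\cap L^2_t H^2$, $\u_t\in L^2((0,T)\times\O)$. For $f$, the characteristic ODE $\dot X=V$, $\dot V=\u(t,X)-V$ admits a unique regular Lagrangian flow $\Phi^t$ thanks to this regularity of $\u$, and the pushforward identity $f(t,\Phi^t(x,\v))=e^{dt}f_0(x,\v)$ yields the claimed $L^\infty$ propagation together with continuity in $t$.

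For uniqueness, let $(\u_i,f_i)$, $i=1,2$, be two solutions of the above class with identical initial data, and set $\W=\u_1-\u_2$, $g=f_1-f_2$. Subtracting the two momentum equations and testing with $\W$ gives
\begin{equation*}
\f12\f{d}{dt}\|\W\|_{L^2}^2+\|\na\W\|_{L^2}^2\le|\langle(\W\cd\na)\u_2,\W\rangle|+|\langle F_1-F_2,\W\rangle|,
\end{equation*}
and the algebraic decomposition $F_1-F_2=m_1 g-\u_1 m_0 g-\W\,m_0 f_2$ isolates the kinetic contribution. Ladyzhenskaya together with $\u_2\in L^2_t H^2$ and $m_0 f_2\in L^\infty_t L^4_x$ takes care of the convective term and of $\int|\W|^2 m_0 f_2\,dx$. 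The remaining terms are controlled via the Lagrangian representation: for any Lipschitz test function $\phi$,
\begin{equation*}
\int \phi(x,\v)\,g(t,x,\v)\,dx\,d\v=\int f_0(y,w)\bigl[\phi(\Phi_1^t(y,w))-\phi(\Phi_2^t(y,w))\bigr]dy\,dw,
\end{equation*}
which, choosing $\phi(x,\v)=(1+|\v|)\psi(x)$ with $\psi$ dual to $\W$, yields
\begin{equation*}
|\langle m_1 g-\u_1 m_0 g,\W\rangle|\le C\|\W\|_{L^\infty_x}\int(1+|w|)f_0(y,w)\,|\Phi_1^t(y,w)-\Phi_2^t(y,w)|\,dy\,dw.
\end{equation*}
A Gronwall argument on the characteristic ODEs bounds $|\Phi_1^t-\Phi_2^t|(y,w)$ by $(1+|w|)\int_0^t\|\W(s)\|_{L^\infty_x}\,ds$ up to an Osgood-type correction, and $\|\W\|_{L^\infty_x}$ is in turn controlled by $\|\W\|_{L^2}^{1/2}\|\W\|_{H^2}^{1/2}$ via interpolation; the weighted integral in $f_0(y,w)(1+|w|)^2$ is finite thanks precisely to $|\v|^6 f_0\in L^1$. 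Feeding everything back into the fluid inequality gives a closed Gronwall inequality $\f{d}{dt}\|\W\|_{L^2}^2\le C(t)\|\W\|_{L^2}^2$ with $C\in L^1(0,T)$, forcing $\W\equiv 0$ and hence $g\equiv 0$.

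The delicate point is precisely this characteristic comparison: since $H^2(\O)$ does not embed in $W^{1,\infty}(\O)$ in two dimensions, the flow $\Phi_i^t$ is not classically Lipschitz and one must argue through an Osgood or DiPerna-Lions type estimate; matching the resulting non-Lipschitz Gronwall loop for $|\Phi_1^t-\Phi_2^t|$ with the $L^2$ energy loop for $\W$ is the main technical obstacle, and it is where the regularity hypothesis $|\v|^6 f_0\in L^1$ becomes essential.
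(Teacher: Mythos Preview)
Your regularity argument is essentially the paper's: test the momentum equation against $\u_t$, use Ladyzhenskaya for the convection, propagate $M_6 f$ via Lemma~\ref{L2} to put $m_0 f\in L^\infty_tL^4_x$ and $m_1 f\in L^\infty_tL^2_x$ (hence $F\in L^2$), close by Gronwall, and read off $\u\in L^\infty_tH^1\cap L^2_tH^2$ from Stokes regularity. No issue there.

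The uniqueness argument, however, diverges from the paper and carries the gap you yourself flag. You test the momentum difference with $\W$ in $L^2$ and try to control $m_0 g$, $m_1 g$ through the Lagrangian representation $g=f_0\circ(\Phi_1^t)^{-1}-f_0\circ(\Phi_2^t)^{-1}$, which forces you to compare $\Phi_1^t$ and $\Phi_2^t$. Since $\u_i\in L^2_tH^2(\Omega)$ in two dimensions gives only log-Lipschitz spatial regularity, the flows are not Lipschitz and the stability estimate $|\Phi_1^t-\Phi_2^t|\lesssim\int_0^t\|\W\|_{L^\infty}$ fails; an Osgood bound produces a \emph{sublinear} dependence on $\int_0^t\|\W\|_{L^\infty}$, which does not close a Gronwall loop against $\tfrac{d}{dt}\|\W\|_{L^2}^2$. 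There is a second, related problem in your displayed inequality: to get the factor $\|\W\|_{L^\infty}$ on the right you need the test function $(1+|\v|)\psi(x)$ to be Lipschitz with constant comparable to $\|\W\|_{L^\infty}$, but its $x$-Lipschitz constant is $\|\nabla\psi\|_{L^\infty}$, which is not controlled by $\|\W\|_{H^2}$ in 2D. So as written the Lagrangian step is not justified.

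The paper avoids all of this by staying Eulerian. Instead of an $L^2$ energy for $\bar\u$, it invokes maximal parabolic regularity (Lemma~\ref{L4.2}) to estimate $\|\bar\u\|_X$ with $X=L^\infty_tH^1_0\cap L^2_tH^2$ directly in terms of the $L^2_{t,x}$ norm of the right-hand side. For the kinetic difference it multiplies the equation for $\bar f$ by $|\v|^k$ and integrates, obtaining the recursion
\[
\partial_t M_k\bar f + kM_k\bar f \;\le\; k\|\bar\u\|_{L^2}\|m_{k-1}f_1\|_{L^2} + k\|\u_2\|_{H^2}\,M_{k-1}\bar f,
\]
which, starting from $M_0\bar f=0$, inducts up to $k=4$ and yields $\|m_0\bar f\|_{L^\infty_tL^3}+\|m_1\bar f\|_{L^2_{t,x}}\le C(T)\|\bar\u\|_X$ with $C(T)\to 0$ as $T\to 0$. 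Feeding this back into the parabolic estimate gives $\|\bar\u\|_X\le\tfrac12\|\bar\u\|_X$ on a short interval whose length depends only on uniformly bounded quantities, after which one iterates. No characteristics, no Osgood, and this is where the higher moments of $f_1$ (hence the hypothesis $|\v|^6f_0\in L^1$, via $\|m_{k-1}f_1\|_{L^2}$ for $k\le 4$) actually enter the uniqueness proof.
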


% The proof of Theorem \ref{T1} is based on the following approximation scheme:
%\begin{equation*}
%\begin{split}
%&\partial_t\u_n+J_n(J_n\u_n\cdot\nabla J_n\u_n)-\D J_n\u_n+\nabla p_n=-J_n\left(\int_{\R^d}(\u_n-\v)f_n \,d\v1_{\{|\u_n|\leq\lambda\}}\right),
%\\& \Dv \u_n=0,
%\\& \partial_tf_n+\v\cdot\nabla_xf_n+\Dv_{v}(\chi_{\lambda}(\u_n)-\v)f_n)=0,
%\end{split}
%\end{equation*}
%with appropriate initial conditions, where the operator $J_n$ and $\chi_{\lambda}(\u_n)$ will be defined in Section 3.
%Given $\u_n$ in $L^{\infty}(0,T;L^2(\R^d))$, we can solve the above Vlasov equation by characteristic method and control the term
%$$-\int_{\R^d}(\u_n-\v)f_n \,d\v1_{\{|\u_n|\leq\lambda\}}.$$  Thus the regularized Navier-Stokes equations can be viewed as ordinary differential %equations in $L^2$ verifying the conditions of the Cauchy-Lipschitz theorem. The last step is to recover the original system as $n\to \infty$ and %$\lambda\to\infty$.

\bigskip

%%%%%%%%%%%%%%%%%%%%%%%%%%
\section{The Existence of Weak Solutions}
The goal of this section is to show the existence of global weak solutions to \eqref{1.1} with initial data \eqref{1.2} and boundary condition \eqref{boundary condition}. The key idea is to construct an approximation scheme, establish its existence for the global weak solutions, and pass to the limit for recovering the original system. In this section, we shall prove our main result Theorem \ref{T1} in the case $d=3.$ All arguments do work in the case $d=2.$
\subsection{Approximation Scheme}
We regularize the equations \eqref{1.1} and construct a solution of the regularized system of equations. We view the first two equations in \eqref{1.1} as Navier-Stokes equations with a source term $\int_{\R^3}(\u-\v)f\, d\v$. The key idea is to control $\int_{\R^3}(\u-\v)f \,d\v$ in $L^{2}((0,T)\times\O)$ so that we can solve the Navier-Stokes equations directly. For that purpose, we follow the spirit in \cite{MV} to modify the Vlasov equation by truncating the velocity field $\u$: we consider
\begin{equation}
\label{3.0+}
\partial_t f+\v\cdot\nabla_x f+\Dv_{\v}\left((\chi_{\lambda}(\u)-\v)f\right)=0
\end{equation}
where $$\chi_{\lambda}(\u)=\u 1_{\{|\u|\leq\lambda\}}.$$
To preserve the similar energy inequality, we need to modify Navier-Stokes equations accordingly. This can be done by substituting the right hand side of the first equation in \eqref{1.1} by $$-\int_{\R^3}(\u-\v)f\,d\v 1_{\{|\u|\leq\lambda\}}.$$

To establish the global weak solutions, we find a modified Galerkin method particularly convenient. We define the space $H$ as the closure of the space $C_0^{\infty}(\O,\R^3)\cap\{\u:\Dv\u=0\}$ in $L^{2}(\O,\R^3)$. We let $\{\phi_i\}_{i=1}^{\infty}$ be an orthogonal basis of the functional space $H$ and such that
\begin{equation*}
\begin{split}
&\D\phi_i+\nabla P_i=-e_{i}\phi_{i}\;\;\text{ in } \O,
\\& \;\;\;\;\phi_i=0\;\; \text{ on } \partial\O
\end{split}
\end{equation*}
for $i=1,2,3...$. Here $0\leq e_1\leq e_2\leq e_3\leq...\leq e_n\leq ...$ with $e_n\to \infty$ as $n\to\infty.$
Define $ P_m:H\mapsto H_m=\text{span}\{\phi_1,\phi_2,...,\phi_m\}$ as the orthogonal projection.
We propose the following regularized Navier-Stokes equations
 \begin{equation*}
 \begin{split}
 &\partial_t\u_m=P_m\left(\D\u_m-\u_m\cdot\nabla\u_m-\int_{\R^3}(\tilde{\u}-\v)f_m \,d\v1_{\{|\tilde{\u}|\leq\lambda\}}\right),
 \\& \u_m(x,t)\in H_m,\;\;\text{ and }\Dv\u_m=0.
 \end{split}
 \end{equation*}
 Thus the approximation scheme for the Navier-Stokes-Vlasov equations \eqref{1.1} is given by the following system

\begin{equation}
\begin{split}
\label{3.1}
 &\partial_t\u_m=P_m\left(\D\u_m-\u_m\cdot\nabla\u_m-G_m\right),
 \\& \u_m(x,t)\in H_m,\;\;\text{ and }\;\;\Dv\u_m=0.
\\& \partial_tf_m+\v\cdot\nabla_xf_m+\Dv_{\v}(\chi_{\lambda}(\tilde{\u})-\v)f_m)=0,
\end{split}
\end{equation}
with the initial data,
$$\u_m|_{t=0}=P_m\u_0, \quad\quad\quad f_m|_{t=0}=f_0,$$
where $$G_m=\int_{\R^3}(\tilde{\u}-\v)f_m \,d\v1_{\{|\tilde{\u}|\leq\lambda\}}$$ and $$\tilde{\u}\;\text{ is given in } L^{2}(0,T;L^2(\O)).$$
The rest of this subsection is devoted to show the global existence to \eqref{3.1} with its initial data. For given $\tilde{\u}$, we can get enough regularity of $-\int_{\R^3}(\tilde{\u}-\v)f \,d\v 1_{\{|\tilde{\u}|\leq\lambda\}}$ to solve the modified Navier-Stokes equations.

Indeed, we have $$\chi_{\lambda}(\tilde{\u})\in L^{\infty}((0,T)\times\O),$$
due to $$\tilde{\u}\in L^{2}(0,T;L^2(\O)).$$
Considering the following equation
\begin{equation*}
\begin{split}
&\partial_t f+\v\cdot\nabla_xf+\Dv_{\v}((\chi_{\lambda}(\tilde{\u})-\v)f)=0;
\\& f(x,\v,0)=f_0(x,\v),\;\;\;f(t,x,\v)=f(t,x,\v^{*})\;\;\, \text {for } x\in\partial\O,\;\v\cdot\nu(x)<0,
\end{split}
\end{equation*}
where $\v^*=\v-2(\v\cdot\nu(x))\nu(x),$ the existence and uniqueness of the solution can be obtained as in  \cite{BP,DL,H}.

Applying the maximal principle to the above equation, we have
\begin{equation}
\label{3.3}
\|f(t,x,\v)\|_{L^p}\leq C(T)\|f_0\|_{L^{p}},\text{  for any } p>1.
\end{equation}
Thanks to Lemma \ref{L2}, $$\chi_{\lambda}(\tilde{\u})\in L^{\infty}((0,T)\times\O),$$
and $$\int_{\O}\int_{\R^3}|\v|^5f_0\;d\v dx<+\infty,$$
 we have
\begin{equation*}
\int_{\O}\int_{\R^3}|\v|^5fd\v dx<+\infty.
\end{equation*}
This, together with Lemma 1 in \cite{BDGM}, yields
\begin{equation}
\label{3.1+}
\int_{\R^3}f\,d\v \in L^{2}(0,T;L^2(\O)),\text{ and } \int_{\R^3}\v f\,d\v \in L^{2}(0,T;L^2(\O)).
\end{equation}
By \eqref{3.1+}, we get, for all $t>0$, that
$G_m\in L^{2}(0,T;L^2(\O)).$

For each $m$ we define
an approximate solution $\u_m$
of \eqref{3.1} as follows:
\begin{equation*}
\u_m=\sum_{i=1}^{m}g_{i m}(t)\phi_i(x),
\end{equation*}
and hence \eqref{3.1} is equivalent to

\begin{equation}
\label{approximation}
\frac{d g_{m}^{i}(t)}{dt}=-e_{i}g_{m}^{i}(t)-\int_{\O}(\phi_j(x)\cdot\nabla\phi_k(x))\phi_i(x))\,dx\, g_{m}^{j}(t)g_{m}^{k}(t)+\int_{\O}G_m\phi_{i}(x)dx.
\end{equation}
The initial data becomes
\begin{equation*}
\sum_{i=1}^{m}g_{m}^{i}(0)\phi_{i}(x)=P_m\u_0(x),
\end{equation*}
which is equivalent to saying that
\begin{equation}
\label{initialofapproximation}
g_{m}^{i}(0)=(\u_0,\phi_i)\;\;\text{ for } i=1,2,3,...,m.
\end{equation}

So the system \eqref{approximation} with its initial data \eqref{initialofapproximation} can be viewed as an ordinary differential equations in $L^2$ verifying the conditions of the Cauchy-Lipschitz theorem.
Thus it admits a unique maximal solution $$\u_m\in C^{1}([0,T_m];L^{2}(\O)).$$

It is easy to find the energy inequality to regularize Navier-Stokes equations as follows
\begin{equation*}
\int_{\O}|\u_m|^2\,dx+2\int_0^t\int_{\O}|\nabla\u_m|^2\,dxdt\leq\int_{\O}|\u_{m0}|^2\,dx+\int_0^t\int_{\O}G_m\u_m\,dxdt,
\end{equation*}
which, together with $G_m\in\L^{2}(0,T;L^2(\O))$, allows us to take $T_m=T.$

We define an operator
\begin{equation*}
\begin{split}
S:\quad L^{2}((0,T)\times\O)&\mapsto L^{2}((0,T)\times\O)
\\& \tilde{\u}\mapsto \u_m.
\end{split}
\end{equation*}
Here we need to rely on the following lemma:
\begin{Lemma}
\label{L3.2}
The operator $S$ has a fixed point in $L^{2}((0,T)\times\O)$, that is, there is a point $\u_m\in L^{2}((0,T)\times\O)$ such that $S\u_m=\u_m.$
\end{Lemma}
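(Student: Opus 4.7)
The plan is to apply Schauder's fixed point theorem to the operator $S$ on a closed, convex, bounded subset of $L^{2}((0,T)\times\O)$. A convenient choice is
\[
K_R = \{\u\in L^{2}(0,T;L^{2}(\O)):\ \u(t,\cdot)\in H_m\text{ for a.e. }t,\ \|\u\|_{L^{2}((0,T)\times\O)}\le R\},
\]
with $R$ to be pinned down below.

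\emph{Self-mapping and compactness.} The crucial feature of the truncation is that $|\chi_{\lambda}(\tilde{\u})|\le\lambda$ uniformly in $\tilde{\u}$, so the modified Vlasov equation always carries an $L^{\infty}$ drift regardless of the fixed-point iterate. Applying Lemma \ref{L2} and the $L^{2}$-bounds on $m_{0}f_m$ and $m_{1}f_m$ (via Lemma \ref{L3} and Lemma~1 of \cite{BDGM}) with $\chi_{\lambda}(\tilde{\u})$ in place of $\u$, one obtains
\[
\|G_m\|_{L^{2}((0,T)\times\O)}\le C(m,\lambda,T,f_{0}),
\]
independently of $\tilde{\u}$. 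The Galerkin energy estimate recalled just above the statement then controls $\u_m=S\tilde{\u}$ in $L^{\infty}(0,T;L^{2})\cap L^{2}(0,T;H^{1}_{0})$ by a constant depending only on $C$ and $\u_{0}$, which fixes $R$ so that $S(K_R)\subset K_R$. Because $H_m$ is finite-dimensional, all Sobolev norms on it are equivalent, and \eqref{approximation} gives $\partial_t\u_m\in L^{2}((0,T)\times\O)$ with an analogous bound; Aubin--Lions then makes $S(K_R)$ precompact in $L^{2}((0,T)\times\O)$.

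\emph{Continuity.} Take $\tilde{\u}_n\to\tilde{\u}$ in $L^{2}((0,T)\times\O)$ and pass to a subsequence converging almost everywhere. Off the level set $\{|\tilde{\u}|=\lambda\}$, $\chi_{\lambda}(\tilde{\u}_n)\to\chi_{\lambda}(\tilde{\u})$ pointwise, and since $|\chi_{\lambda}(\tilde{\u}_n)|\le\lambda$, dominated convergence yields $\chi_{\lambda}(\tilde{\u}_n)\to\chi_{\lambda}(\tilde{\u})$ in $L^{p}$ for every $p<\infty$. The DiPerna--Lions stability theory \cite{DL} for the transport equation then transfers this convergence to $f_m^{n}\to f_m$ in $L^{p}$, which together with the uniform moment bounds forces $G_m^{n}\to G_m$ in $L^{2}((0,T)\times\O)$. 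The Galerkin ODE \eqref{approximation} depends continuously on its inhomogeneity, so $\u_m^{n}\to\u_m$ in $C([0,T];H_m)$, hence in $L^{2}((0,T)\times\O)$; as every subsequence has a sub-subsequence converging to the same limit, the full sequence converges, and $S$ is continuous on $K_R$.

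\emph{Main obstacle.} The delicate point is precisely this last step: the cutoff $\chi_{\lambda}$ jumps at $|\u|=\lambda$, so if $\{|\tilde{\u}|=\lambda\}$ has positive measure the pointwise argument fails. The cleanest remedy is to approximate $1_{\{|\u|\le\lambda\}}$ by a Lipschitz cutoff $\eta_{\epsilon}$, prove the fixed-point statement with $\eta_{\epsilon}$ in place of the indicator, and let $\epsilon\to 0$ using the $\tilde{\u}$-uniform bounds above; this regularization is harmless for the subsequent passages $m\to\infty$ and $\lambda\to\infty$ that recover \eqref{1.1}. With continuity in hand, Schauder's theorem supplies a fixed point $\u_m\in K_R$ with $S\u_m=\u_m$, which together with the corresponding $f_m$ solves the approximate system \eqref{3.1}.
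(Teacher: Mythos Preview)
Your approach coincides with the paper's: both prove the lemma by Schauder's fixed point theorem, using the $\tilde{\u}$-independent $L^2$ bound on $G_m$ (from the $L^\infty$ truncation) together with the Galerkin energy estimate to get $S(\tilde{\u})$ bounded in $L^2(0,T;H^1_0)$ and $\partial_t S(\tilde{\u})$ bounded in $L^2(0,T;H^{-1})$, and then Aubin--Lions for compactness. The paper's argument is in fact considerably terser than yours: it simply records the two bounds \eqref{w1}--\eqref{w2}, declares $S$ compact with bounded image, and invokes Schauder without setting up an explicit invariant ball $K_R$ or verifying continuity of $S$.

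Your addition of the continuity step---and in particular your identification of the jump discontinuity of $\chi_\lambda$ at $|\tilde{\u}|=\lambda$ as a genuine obstacle---is a point the paper does not address at all. Your proposed remedy (replace the sharp cutoff by a Lipschitz $\eta_\epsilon$, run the fixed-point argument there, then let $\epsilon\to0$ using the uniform bounds) is a clean way to close this gap; the subsequent limits $m\to\infty$ and $\lambda\to\infty$ are indeed unaffected. So your proof follows the same route but is more complete on this technical point.
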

\begin{proof}
Multiplying by $\u_m$ the both sides of \eqref{3.1}, and using integration by parts, one obtains that
\begin{equation}
\begin{split}
\label{energyofapproximation}
&\frac{d}{dt}\int_{\O}\frac{1}{2}|\u_m|^2\,dx
+\int_{\O}|\nabla\u_m|^2\,dx\leq \int_{\O}\left(\int_{\R^3}(\tilde{\u}-\v)f_m\,d\v 1_{\{|\tilde{\u}|\leq\lambda\}}\right)\u_m \,dx.
\end{split}
\end{equation} Considering the force term of the modified Navier-Stokes equations, we have
\begin{equation*}
|\int_{\O\times\R^3}(\tilde{\u}-\v)f_m1_{\{|\tilde{\u}|\leq \lambda\}}\u_m \,d\v\, dx|\leq(\|\int_{\R^3}\v f\,d\v\|_{L^2}+\lambda\|\int_{\R^3} f\,d\v\|_{L^2})^2+\|\u_m\|_{L^2}^2.
\end{equation*}
This, together with \eqref{energyofapproximation}, implies that
\begin{equation*}
\partial_t\int_{\O}\frac{1}{2}|\u_m|^2\,dx+\int_{\O}|\nabla\u_m|^2\,dx\leq \int_{\R^3}|\u_m|^2dx+C(m).
\end{equation*}
By Gronwall's inequality, we have
\begin{equation*}
\sup_{t\in(0,T)}\int_{\O}|\u_m|^2\,dx\leq C(m),
\end{equation*}
which means that
\begin{equation}
\label{w1}
\|S\tilde{\u}_m\|_{L^{2}((0,T);H^{1}_0(\O))}\leq C(m).
\end{equation}

By the first equation in \eqref{3.1},
one obtains that
\begin{equation}
\label{w2}
\|\partial_t S\tilde{\u}_m\|_{L^{2}(0,T;H^{-1}_{0}(\O))}\leq C(m).
\end{equation}
By \eqref{w1} and \eqref{w2}, we conclude that the operator $S$ is compact in $L^{2}(0,T;L^2(\O))$ and the image of the operator $S$ is bounded in $L^{2}(0,T;\O)$. So Schauder's fixed point theorem will give us that the operator $S$ has a fixed point $\u_m$ in $L^{2}(0,T;\O)$.
\end{proof}
Applying Lemma \ref{L3.2}, for any $T>0$, there exists a solution $(\u_m,f_m)$ to the following system
\begin{equation}
\begin{split}
\label{approximation2}
 &\partial_t\u_m=P_m\left(\D\u_m-\u_m\cdot\nabla\u_m-\int_{\R^3}(\u_m-\v)f_m\,d\v1_{|\u_m|\leq \lambda}\right),
 \\& \u_m(x,t)\in H_m,\;\;\text{ and }\;\;\Dv\u_m=0.
\\& \partial_tf_m+\v\cdot\nabla_xf_m+\Dv_{\v}(\chi_{\lambda}(\u_m)-\v)f_m)=0,
\end{split}
\end{equation}
with its initial data $$\u_m(0,x)=P_m\u_0,\;\;\,f(0,x,\v)=f_0(x,\v),$$
and boundary conditions $$\u_m|_{\partial\O}=0, \;\;\text{ and }\;\;\, f(t,x,\v)=f(t,x,\v^*) \;\;\text{ for any}\,\,\, x\in\partial\O,\;\v\cdot\nu(x)<0.$$

Concerning the system \eqref{approximation2} with the initial-boundary data,
we have the following energy inequality
\begin{equation}
\label{3.5}
\begin{split}
&\int_{\O}\frac{1}{2}|\u_m|^2\,dx+\int_{\O\times\R^3}f_m(1+|\v|^2)\;d\v dx\\&+\int_0^T\int_{\O}|\nabla\u_m|^2\,dxdt
+\int_0^T\int_{\O\times\R^3}f_m|\chi_{\lambda}(\u_m)-\v|^2\,d\v dxdt
\\&\leq\int_{\O}\frac{1}{2}|\u_0|^2\,dx+\int_{\O\times\R^3}f_0(1+|\v|^2)\,d\v\,dx,
\end{split}
\end{equation}
due to the fact
\begin{equation*}
\int_{\O\times\R^3}(\u_m-\v)f_m1_{\{|\u_m|\leq\lambda\}}\u_m \,d\v \,dx=\int_{\O\times\R^3}\chi_{\lambda}(\u_m)(\chi_{\lambda}(\u_m)-\v)f_m\,dx\, d\v.
\end{equation*}

Then we have the following result:
\begin{Proposition}
\label{P2}
For any $T>0$, there is a weak solution $(\u^m,f^m)$ to the following system
\begin{equation*}
\begin{split}
 &\partial_t\u_m=P_m\left(\D\u_m-\u_m\cdot\nabla\u_m-\int_{\R^3}(\u_m-\v)f_m\,d\v1_{|\u_m|\leq \lambda}\right),
 \\& \u_m(x,t)\in H_m,\;\;\text{ and }\;\;\Dv\u_m=0.
\\& \partial_tf_m+\v\cdot\nabla_xf_m+\Dv_{\v}(\chi_{\lambda}(\u_m)-\v)f_m)=0,
\end{split}
\end{equation*}
with its initial data $$\u_m(0,x)=P_m\u_0,\;\;\,f(0,x,\v)=f_0(x,\v),$$
and boundary conditions $$\u_m|_{\partial\O}=0, \;\;\text{ and }\;\;\, f(t,x,\v)=f(t,x,\v^*) \;\;\text{ for any}\,\,\, x\in\partial\O,\;\v\cdot\nu(x)<0.$$
In additional, the solution satisfies the following energy inequality:
\begin{equation*}
\begin{split}
&\int_{\O}\frac{1}{2}|\u_m|^2\,dx+\int_{\O\times\R^3}f_m(1+|\v|^2)\;d\v dx\\&+\int_0^T\int_{\O}|\nabla\u_m|^2\,dxdt
+\int_0^T\int_{\O\times\R^3}f_m|\chi_{\lambda}(\u_m)-\v|^2\,d\v dxdt
\\&\leq\int_{\O}\frac{1}{2}|\u_0|^2\,dx+\int_{\O\times\R^3}f_0(1+|\v|^2)\,d\v\,dx,
\end{split}
\end{equation*}
\end{Proposition}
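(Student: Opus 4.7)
The plan is to combine Lemma \ref{L3.2} with a direct energy computation. A fixed point $\u_m = S\u_m$ produced by Lemma \ref{L3.2} is by construction a solution of the full coupled system \eqref{approximation2}, because the Vlasov branch of the operator $S$ is built by inserting $\chi_\lambda(\tilde\u)$ into the transport equation while the Galerkin Navier-Stokes branch is built by inserting $\tilde\u$ into the force term; choosing $\tilde\u = \u_m$ reproduces \eqref{approximation2} in both branches simultaneously. Hence existence reduces to a citation of Lemma \ref{L3.2}, and only the energy inequality requires a genuinely new argument.

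To derive the energy inequality, I would test the first equation of \eqref{approximation2} against $\u_m \in H_m$, which is admissible because $P_m$ is the $L^2$-orthogonal projection onto $H_m$ and so $P_m$ drops out, obtaining
\begin{equation*}
\frac{d}{dt}\int_{\O} \frac{|\u_m|^2}{2}\,dx + \int_{\O} |\nabla \u_m|^2\,dx = -\int_{\O\times\R^3}(\u_m-\v)f_m\,\mathbf{1}_{\{|\u_m|\leq\lambda\}}\cdot\u_m\,d\v\,dx,
\end{equation*}
and then multiply the Vlasov equation in \eqref{approximation2} by $1 + \tfrac{1}{2}|\v|^2$ and integrate over $\O\times\R^3$, exactly as in the proof of Lemma \ref{L1}. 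Integration by parts in $\v$ produces the dissipation $\int f_m |\chi_\lambda(\u_m)-\v|^2\,d\v\,dx$ on the left-hand side together with the cross term $\int \chi_\lambda(\u_m)\cdot(\chi_\lambda(\u_m)-\v) f_m\,d\v\,dx$ on the right; the transport term $\v\cdot\nabla_x f_m$ contributes only the boundary integral $\int_{\partial\O}\int_{\R^3}(\v\cdot\nu)f_m(1+|\v|^2/2)\,d\v\,dS_x$, which vanishes by the specular reflection condition after the change of variables $\v\mapsto\v^*$ on the inflow half, using $|\v^*|=|\v|$ and $\v^*\cdot\nu = -\v\cdot\nu$.

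Adding the two identities, the cross terms collapse via the pointwise identity highlighted in the excerpt,
\begin{equation*}
(\u_m - \v)\,\mathbf{1}_{\{|\u_m|\leq\lambda\}}\cdot\u_m = \chi_\lambda(\u_m)\cdot(\chi_\lambda(\u_m)-\v),
\end{equation*}
which holds because both sides vanish where $|\u_m|>\lambda$ and coincide with $(\u_m-\v)\cdot\u_m$ where $|\u_m|\leq\lambda$. Integrating in time and using $\|P_m\u_0\|_{L^2}\leq\|\u_0\|_{L^2}$ then yields the claimed inequality. The only genuinely delicate step is the specular-reflection cancellation of the Vlasov boundary integral; the remaining manipulations are Galerkin-level identities, justified by the time regularity of $\u_m$ from the fixed-point construction (which places $\u_m$ in the finite-dimensional $H_m$) and by the classical well-posedness of the modified Vlasov transport equation used in the definition of $S$.
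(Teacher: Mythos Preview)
Your proposal is correct and follows essentially the same approach as the paper: existence is obtained directly from the fixed point in Lemma \ref{L3.2}, and the energy inequality \eqref{3.5} is derived by combining the $\u_m$-tested Galerkin equation with the $(1+\tfrac12|\v|^2)$-tested Vlasov equation via the pointwise identity $(\u_m-\v)\mathbf{1}_{\{|\u_m|\le\lambda\}}\cdot\u_m=\chi_\lambda(\u_m)\cdot(\chi_\lambda(\u_m)-\v)$. Your treatment is in fact slightly more explicit than the paper's, spelling out the specular-reflection cancellation of the Vlasov boundary term and the use of $\|P_m\u_0\|_{L^2}\le\|\u_0\|_{L^2}$, both of which the paper leaves implicit.
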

\subsection{Passing to the Limit as $m\to\infty$}
In this section, we will pass the limit as $m$ goes to infinity in the family of approximate
solutions $(\u^{m},f^{m})$  obtained in Proposition \ref{P2}. The estimates in Proposition \ref{P2}
are independent of $m, \lambda$, and those estimates still hold for any $m$.
By \eqref{3.3}, we have
\begin{equation*}
\|f^{m}\|_{L^{\infty}(0,T;L^{p}(\O\times\R^3))}\leq C
\end{equation*}
for all $1\leq p\leq \infty.$
Proposition \ref{P2} yields the following estimates
\begin{equation*}
\begin{split}
&\|\u^{m}\|_{L^{\infty}(0,T;L^{2}(\O))}\leq C,
\\&\|\nabla\u^{m}\|_{L^{2}(0,T;L^{2}(\O))} \leq C.
\end{split}
\end{equation*}

 From the above a priori estimates, we conclude that there exists a function $f$ such that
 $$f^{m}\rightharpoonup f\;\;\text{ weak star in } L^{\infty}(0,T;L^{p}(\O\times\R^3))$$
 for all $p\in(1,\infty).$

 This weak convergence cannot provide us enough information for passing the limit. For our purpose,
 we rely on the following average compactness results for the Vlasov equation due to DiPerna-Lions-Meyer \cite{DLM}:
\begin{Lemma}
\label{averagelemma}
\begin{equation*}
\frac{\partial f^{n}}{\partial t}+\v\cdot\nabla_{x}f^n=\Dv_{\v}(F^n f^n)\;\;\text{ in } \mathcal{D}'(\R^3_x+\R^3_\v\times(0,\infty))
\end{equation*}
where $f^n$ is bounded in $L^{\infty}(0,\infty;L^2_{x,\v}\bigcap L^1_{x,\v}(1+|\v|^2))$, $\frac{F^n}{1+|\v|}$ is bounded in $L^{\infty}((0,\infty)\times \R^3_{\v};L^{2}(\R^3_{x})).$
Then $\int_{\R^3}f^n\eta(\v)\,d\v$ is relatively compact in $L^{q}(0,T;L^{p}(B_{R}))$ for all $R,T<\infty$, $1\leq q <\infty, 1\leq p< 2$ and for $\eta$ such that $\frac{\eta}{(1+|\v|)^\sigma}\in L^1+L^{\infty}, \sigma \in [0,2).$
\end{Lemma}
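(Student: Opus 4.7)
The plan is to establish this velocity averaging estimate by Fourier analysis in the space-time variables $(t,x)$, in the spirit of Golse-Lions-Perthame-Sentis, adapted to handle the divergence-in-$\v$ source term as in DiPerna-Lions-Meyer. After extending $f^n$ to all of $\R_t \times \R^3_x$ (the statement is posed on the whole space), I take the Fourier transform in $(t,x)$ of the transport equation to obtain
\begin{equation*}
i(\tau + \v\cdot\xi)\,\hat{f}^n(\tau,\xi,\v) = \widehat{\Dv_{\v}(F^n f^n)}(\tau,\xi,\v),
\end{equation*}
whose symbol vanishes on the resonance set $\{\tau + \v\cdot\xi = 0\}$.

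The core step is to split the velocity average against $\eta$ using a smooth cutoff $\chi_\delta$ that localizes a neighborhood of the resonance set of width $\delta$:
\begin{equation*}
\int_{\R^3}f^n\eta(\v)\,d\v = I_1^{\delta} + I_2^{\delta},
\end{equation*}
where $I_1^\delta$ is the near-resonance contribution and $I_2^\delta$ the off-resonance one. For $I_1^\delta$, Cauchy-Schwarz together with the small $\v$-measure ($O(\delta/|\xi|)$) of the slab $|\tau+\v\cdot\xi|\leq \delta$ and the uniform $L^2_{x,\v}$ bound on $f^n$ yields a contribution that is small in a negative regularity norm. For $I_2^\delta$, I invert the symbol, pair with $\eta(\v)$, and integrate by parts in $\v$: the $\Dv_\v$ is absorbed by the test function, producing a quantity proportional to $\int F^n f^n\cdot \nabla_\v \eta\, d\v$, which is controlled uniformly thanks to the hypothesis that $F^n/(1+|\v|)$ is bounded in $L^{\infty}_{t,\v}(L^2_x)$ and the decay assumption on $\eta$. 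Optimizing $\delta$ in $(\tau,\xi)$ then yields a uniform bound on the velocity average in a Sobolev (or Besov) space of some positive order $s>0$ in $(t,x)$.

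Combining this gain of fractional regularity with the uniform $L^p_{t,x}$-bounds (obtained from the $L^\infty(L^2_{x,\v})$ control and interpolation with moments in $\v$), local Rellich-Kondrachov compactness on the bounded set $B_R$ yields the claimed strong relative compactness in $L^q(0,T;L^p(B_R))$ for $1\leq q<\infty$ and $1\leq p<2$. The main obstacle compared to the classical $L^2$-source averaging lemma is precisely that the source lies in divergence form in $\v$ rather than in $L^2_{t,x,\v}$; this is exactly why one cannot reach $p=2$ and why the hypothesis $F^n/(1+|\v|)\in L^\infty_{t,\v}(L^2_x)$ (rather than $F^n\in L^2$) is the right quantitative assumption to make the integration by parts in $\v$ close up against test functions $\eta$ with growth controlled by $(1+|\v|)^\sigma$, $\sigma\in[0,2)$.
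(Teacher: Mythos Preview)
The paper does not prove this lemma at all: it is quoted verbatim as a result ``due to DiPerna--Lions--Meyer \cite{DLM}'' and used as a black box to obtain the strong convergence of $m_0 f^m$ and $m_1 f^m$. There is therefore no proof in the paper to compare your proposal against.

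That said, your sketch is in the correct spirit of the cited reference. One technical point is worth tightening: in the off-resonance piece $I_2^\delta$, when you invert the symbol and integrate by parts in $\v$, the derivative does not fall solely on $\eta$. You obtain
\[
\int \widehat{F^n f^n}\cdot\nabla_\v\!\left(\frac{(1-\chi_\delta)\,\eta(\v)}{i(\tau+\v\cdot\xi)}\right)\,d\v,
\]
so in addition to the $\nabla_\v\eta$ term there is a contribution from differentiating the multiplier, namely a factor $\xi/(\tau+\v\cdot\xi)^2$. Controlling this second term is precisely where the optimization in $\delta$ and the real-interpolation argument of DiPerna--Lions--Meyer enter; it is not ``proportional to $\int F^n f^n\cdot\nabla_\v\eta\,d\v$'' as written. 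Once this is handled, the gain-of-regularity plus Rellich--Kondrachov step you describe is the right conclusion, and the restriction $p<2$ indeed comes from the loss incurred by the divergence-in-$\v$ structure of the source.
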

\begin{Remark}
It is crucial to use this lemma to get the strong convergence of $m_0f^n$ and $m_1f^n$.
\end{Remark}

 Let $m_0f$ and $m_1f$ be the density and mean velocity associate with $f$. Applying Lemma \ref{averagelemma} to the Vlasov equation of \eqref{3.1}, one obtains that
% \begin{equation}
 %\label{averageestimate}
 %m_0f^{n}(t,x)\to m_0f(t,x),\;\;\;m_1f^n(t,x)\to m_1f(t,x)
 %\end{equation}
 %for almost everywhere $(t,x).$

 %By Lemma \ref{L3} for the bounds of $m_0f^n$ and $m_1f^n$ and \eqref{averageestimate} imply the strong convergences

 \begin{equation}
 \label{averageestimate2}
 m_0f^{m}(t,x)\to m_0f(t,x),\;\;\;m_1f^m(t,x)\to m_1f(t,x)
 \end{equation}
 in $L^{q}(0,T;L^{p}(B_R))$ for any positive number $R$ and $1\leq q <\infty, 1\leq p< 2$.

Noticing that the right side of Navier-Stokes equations \eqref{3.1}
$$\int_{\O}(\u^m-\v)f^m\,d\v 1_{\{|\u^m|\leq\lambda\}}$$
is bounded in $L^{\infty}(0,T;L^2(\O))$ when $\lambda$ is fixed,
one obtains that
\begin{equation}
\label{3.6}
\|\partial_t\u^{n}\|_{L^{2}(0,T;H^{-1})}\leq C<\infty.
\end{equation}
By \eqref{3.5}-\eqref{3.6}, applying the Aubin-Lions Lemma, (see \cite{T}), there exist a $\u\in L^{\infty}(0,T;L^2)\cap L^{2}(0,T;H^1_0)$, such that
\begin{equation}
\label{3.2+}
\begin{split}
&\u^{m}\rightharpoonup\u \text{  weak star in } L^{\infty}(0,T;L^2)\;\; \text{ and }\;\; \u^{m}\rightharpoonup \u \text{ in }L^{2}(0,T;H^1_0)
\\&\u^{m}\to\u \text{ strongly in } L^2(0,T;H^{1}_{0}).
\end{split}
\end{equation}
The next step is to show the convergence of $(\int_{\R^3}f^m\,d\v)\u^m1_{\{|\u^m|\leq\lambda\}}$ in the sense of distributions.
 %We have two facts that $\int_{\R^3}f^n\;d\v$ weak converges to $\int_{\R^3}f\;d\v$ in $L^2(0,T;L^2)$and $\u^n$ weak converges to $\u$ in %$L^2(0,T;L^6)$.

Note that $\nabla\u^m$ is bounded in $L^2(0,T,L^2(\O))$ and \eqref{averageestimate2}, we have
\begin{equation*}
\left(\int_{\R^3}f^m\,d\v\right)\u^m1_{\{|\u^m|\leq\lambda\}}\to\left(\int_{\R^3}f\;d\v\right)\u1_{\{|\u|\leq\lambda\}}
\end{equation*}
in the sense of distributions.
Therefore,
\begin{equation*}
\left(\int_{\R^3}f^n\,d\v\right)\chi_{\lambda}(\u^n)\to\left(\int_{\R^3}f\,d\v\right) \chi_{\lambda}(\u)
\end{equation*}
in the sense of distributions.
Applying these convergence results, one concludes that $(\u,f)$ is a weak solution to the following system
\begin{equation}
\label{approximation1}
\begin{split}
&\partial_t\u+\u\cdot\nabla\u+\nabla p-\D \u=-\int_{\R^3}(\u-\v)f\,d\v1_{\{|\u|\leq\lambda\}},
\\&\Dv\u=0,
\\&\partial_tf+\v\cdot\nabla_x f+\Dv_{\v}((\chi_{\lambda}(\u)-\v)f)=0
\end{split}
\end{equation}
with its initial data $$u(0,x)=\u_0(x),\;\,f(0,x,\v)=f(0,x,\v),$$ and boundary conditions
$$\u|_{\partial\O}=0,\;\,\text{ and }\;\;\, f(t,x,\v)=f(t,x,\v^*)\;\;\text{ for }\,\,x\in\partial\O,\,\,\v\cdot\nu(x)<0.$$
Next, we show that this solution satisfies a particular energy inequality.

Because the solution $(\u^{m},f^{m})$ satisfies the energy inequality in Proposition \ref{P2}, we have
  \begin{equation*}
\begin{split}
&\int_{\O}\frac{1}{2}|\u^{m}|^2\,dx+\int_{\O\times\R^3}f^{m}(1+|\v|^2)\,d\v\, dx\\&+\int_0^T\int_{\O}|\nabla\u^{m}|^2\,dx\,dt
+\int_0^T\int_{\O\times\R^3}f^{m}|\chi_{\lambda}(\u^{m})-\v|^2\,d\v\, dx\,dt
\\&\leq\int_{\O}\frac{1}{2}|\u_0|^2\,dx+\int_{\O\times\R^3}f_0(1+|\v|^2)\,d\v \,dx.
\end{split}
\end{equation*}

The difficulty of passing the limit for the energy inequality is the convergence of the term $\int_0^T\int_{\O\times\R^3}f^{m}|\chi_{\lambda}(\u^{m})-\v|^2\,d\v\, dx\,dt$.
Here we write the term as:
\begin{equation}
\label{energyinequality}
\begin{split}
&\int_0^T\int_{\O\times\R^3}f^{m}|\chi_{\lambda}(\u^{m})-\v|^2\,d\v\, dx\,dt
\\&\quad\quad\quad\quad\quad\quad=\int_0^T\int_{\O\times\R^3}\left(f^{m}|\chi_{\lambda}(\u^{m})|^2-2f^{m}\chi_{\lambda}(\u^{m})\v+f^m|\v|^2\right)\,dx d\v dt.
\end{split}
\end{equation}
By \eqref{3.2+}, we have
\begin{equation}
\label{convergenceofcutu}
\chi_{\lambda}(\u_m)\to\chi_{\lambda}(\u)\;\;\text{ in } L^{2}(0,T;L^6(\O)).
\end{equation}
Let us look at
\begin{equation}
\begin{split}
\label{energyinequalityconvergence}
&\left|\int_0^T\int_{\O\times\R^3}f^{m}|\chi_{\lambda}(\u^{m})|^2\,d\v dx dt- \int_0^T\int_{\O\times\R^3}f|\chi_{\lambda}(\u)|^2\,d\v dxdt\right|
\\&\leq\int_0^T\int_{\O}\left(\int_{\R^3}(f^m-f)\,d\v\right)|\chi_{\lambda}(\u^m)|^2d\,dx
+\int_0^T\int_{\O}\left(\int_{\R^3}f\,d\v\right)\left(|\chi_{\lambda}(\u^m)|^2-|\chi_{\lambda}(\u)|^2\right)\,dx.
\end{split}
\end{equation}
Applying \eqref{averageestimate2} and \eqref{convergenceofcutu} to \eqref{energyinequalityconvergence}, we deduce that
\begin{equation*}
\int_0^T\int_{\O\times\R^3}f^{m}|\chi_{\lambda}(\u^{m})|^2\,d\v dx dt\to \int_0^T\int_{\O\times\R^3}f|\chi_{\lambda}(\u)|^2\,d\v dxdt
\end{equation*}
as $m\to\infty.$
Similarly,
\begin{equation*}
\int_0^T\int_{\O\times\R^3}\v f^{m}\chi_{\lambda}(\u^{m})\,d\v dx dt\to
\int_0^T\int_{\O\times\R^3}\v f\chi_{\lambda}(\u)\,d\v dxdt
\end{equation*}
for all $t>0$.

Finally, because
 $$f^{m}\rightharpoonup f\quad\quad\text{ weak star in } L^{\infty}(0,T;L^{p}(\O\times\R^3))$$
 for all $p\in(1,\infty]$ and $m_2f^m$ is bounded in $L^{\infty}(0,T;L^1(\O))$, then for any fixed $R>0,$ we have
  \begin{equation*}
  \int_0^T\int_{\O\times\R^3}f^m|\v|^2\,dx d\v dt=\int_0^T\int_{\O\times\R^3}\chi(|\v|<R)|\v|^2f^m\,dx d\v dt+O(\frac{1}{R})
  \end{equation*}
  uniformly in $m$, where $\chi$ is the characteristic function of the ball of $\R^3$ of radius $R$.

   Letting $m\to\infty,$ then $R\to\infty$, we find \begin{equation*}
  \int_0^T\int_{\O\times\R^3}f^m|\v|^2\,dx d\v dt\to   \int_0^T\int_{\O\times\R^3}f|\v|^2\,dx d\v dt.
  \end{equation*}
  Thus, we have proved
  \begin{equation}
  \label{energyinequalityconvergence2}
  \int_0^T\int_{\O\times\R^3}f^{m}|\chi_{\lambda}(\u^{m})-\v|^2\,d\v\, dx\,dt\to \int_0^T\int_{\O\times\R^3}f|\chi_{\lambda}(\u)-\v|^2\,d\v\, dx\,dt
  \end{equation}
  as $m\to \infty.$

Letting $m$ go to infinity,  using the convexity of the energy, the weak convergence of $f^{m}$ and $\u^{m}$, and \eqref{energyinequalityconvergence2}, we deduce
  \begin{equation*}
\begin{split}
&\int_{\O}\frac{1}{2}|\u|^2\,dx+\int_{\O\times\R^3}f(1+|\v|^2)\,d\v \,dx\\&+\int_0^T\int_{\O}|\nabla\u|^2\,dx\,dt
+\int_0^T\int_{\O\times\R^3}f|\chi_{\lambda}(\u)-\v|^2d\v \,dx\,dt
\\&\leq\int_{\O}\frac{1}{2}|\u_0|^2\,dx+\int_{\O\times\R^3}f_0(1+|\v|^2)\,d\v\, dx.
\end{split}
\end{equation*}
Thus, we have proved the following result:
\begin{Proposition}
\label{P3}
For any $T>0$, there is a weak solution $(\u^{\lambda},f^{\lambda})$ to \eqref{approximation1}
with the initial data $$\u^{\lambda}(0,x)=\u_0(x),\,\,f^{\lambda}(0,x,\v)=f_0(x,\v),$$
and boundary condition $$\u^{\lambda}|_{\partial\O}=0,\,\,\text{and}\,\,\,f^{\lambda}(t,x,\v)=f^{\lambda}(t,x,\v^*)\,\,\,\,\text{ for } \;x\in\partial\O,\,\v\cdot\nu(x)<0.$$
In additional, the solution satisfies the following energy inequality:
 \begin{equation*}
\begin{split}
&\int_{\R^3}\frac{1}{2}|\u^{\lambda}|^2\,dx+\int_{\O\times\R^3}f^{\lambda}(1+|\v|^2)d\v dx\\&+\int_0^T\int_{\O}|\nabla\u^{\lambda}|^2\,dx\,dt
+\int_0^T\int_{\O\times\R^3}f^{\lambda}|\chi_{\lambda}(\u^{\lambda})-\v|^2\,d\v\, dx\,dt
\\&\leq\int_{\O}\frac{1}{2}|\u_0|^2\,dx+\int_{\O\times\R^3}f_0(1+|\v|^2)\,d\v\, dx.
\end{split}
\end{equation*}
\end{Proposition}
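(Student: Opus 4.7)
The plan is to start from the approximate solutions $(\u^m,f^m)$ furnished by Proposition~\ref{P2} and pass to the limit $m\to\infty$, producing $(\u^\lambda,f^\lambda)$ as a weak-$*$/weak cluster point. The first step is to harvest uniform bounds: the energy inequality in Proposition~\ref{P2} (uniform in $m$) yields $\u^m$ bounded in $L^\infty(0,T;L^2(\O))\cap L^2(0,T;H^1_0(\O))$ and $(1+|\v|^2)f^m$ bounded in $L^\infty(0,T;L^1(\O\times\R^3))$, while Lemma~\ref{L2}, applied to the modified Vlasov equation with $\chi_\lambda(\u^m)\in L^\infty((0,T)\times\O)$, gives $f^m$ bounded in $L^\infty(0,T;L^p(\O\times\R^3))$ for every $p\in[1,\infty]$. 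Banach--Alaoglu then extracts $\u^m\rightharpoonup\u^\lambda$ in $L^2(0,T;H^1_0)$ and weak-$*$ in $L^\infty(0,T;L^2)$, and $f^m\rightharpoonup f^\lambda$ weak-$*$ in $L^\infty(0,T;L^p)$ for all $p\in(1,\infty]$.

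Next I would upgrade the weak convergences where nonlinearities require it. Since $\lambda$ is fixed, the source $G_m=\int_{\R^3}(\u^m-\v)f^m\,d\v\,\mathbf 1_{\{|\u^m|\leq\lambda\}}$ is bounded in $L^\infty(0,T;L^2(\O))$ uniformly in $m$, which via the Navier--Stokes part of \eqref{approximation2} bounds $\partial_t\u^m$ in $L^2(0,T;H^{-1}(\O))$; Aubin--Lions then delivers $\u^m\to\u^\lambda$ strongly in $L^2(0,T;L^2(\O))$ and a.e., and by the cutoff and dominated convergence $\chi_\lambda(\u^m)\to\chi_\lambda(\u^\lambda)$ strongly in every $L^q(0,T;L^p(\O))$ with $q,p<\infty$. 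For the kinetic part, I would invoke the DiPerna--Lions--Meyer averaging Lemma~\ref{averagelemma} applied to the Vlasov equation rewritten with $F^m=\v-\chi_\lambda(\u^m)$, obtaining strong convergence $m_0f^m\to m_0f^\lambda$ and $m_1f^m\to m_1f^\lambda$ in $L^q(0,T;L^p_{\mathrm{loc}}(\O))$ for $1\leq q<\infty$ and $1\leq p<2$.

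With this compactness in hand, I would pass to the limit in each equation. The convective term $\u^m\otimes\u^m$ converges by strong $L^2$ convergence of $\u^m$; the source $G_m$ converges in the sense of distributions because it is a bilinear pairing of the strongly convergent moments $m_0f^m, m_1f^m$ with the strongly convergent cutoff $\chi_\lambda(\u^m)$; and the Vlasov transport and velocity-divergence terms pass to the limit against test functions compactly supported in $\v$ using the same strong convergence of $\chi_\lambda(\u^m)$ together with weak-$*$ convergence of $f^m$. This yields \eqref{approximation1}, together with the initial and specular boundary data.

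The main obstacle is the dissipation term $\int_0^T\int_{\O\times\R^3}f^m|\chi_\lambda(\u^m)-\v|^2\,d\v\,dx\,dt$ of the energy inequality, which is quadratic in quantities that converge only weakly. The plan is to expand the square into three pieces: the factor $f^m|\chi_\lambda(\u^m)|^2$ reorganizes as $(m_0f^m)\,|\chi_\lambda(\u^m)|^2$, both strongly convergent (the second bounded by $\lambda^2$); the cross term reorganizes as $\chi_\lambda(\u^m)\cdot m_1f^m$ and converges identically; and the tail piece $f^m|\v|^2$ is treated by splitting the $\v$-integration at $|\v|=R$, exploiting the uniform $L^\infty_tL^1_x$ bound on the second moment to make the outer part $O(1/R)$, and applying the averaging lemma with $\eta(\v)=|\v|^2\mathbf 1_{\{|\v|<R\}}$ to the inner part, then sending $m\to\infty$ before $R\to\infty$. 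All remaining terms in the energy inequality are convex and lower semicontinuous in the relevant weak topologies, so the target inequality follows by taking $\liminf$.
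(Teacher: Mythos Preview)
Your proposal is correct and follows essentially the same route as the paper: uniform bounds from Proposition~\ref{P2} and Lemma~\ref{L2}, Aubin--Lions for strong convergence of $\u^m$ (hence of $\chi_\lambda(\u^m)$), the DiPerna--Lions--Meyer averaging lemma for strong convergence of $m_0f^m$ and $m_1f^m$, and the same three-term splitting of the dissipation integral with the $|\v|\lessgtr R$ truncation for the $f^m|\v|^2$ piece. The only cosmetic difference is that for the inner part $\int\chi(|\v|<R)|\v|^2 f^m$ the paper uses the weak-$*$ convergence of $f^m$ against the bounded test function $|\v|^2\mathbf 1_{\{|\v|<R\}}$ rather than the averaging lemma, but both arguments are valid.
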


\subsection{Passing the limit as $\lambda\to\infty$}
The last step of showing the global weak solution is to pass the limit as $\lambda$ goes to infinity. First, we let $(\u^{\lambda},f^{\lambda})$ be a solution constructed by Proposition \ref{P3}. It is easy to find that all estimates for $(\u^m,f^m)$ still hold for $(\u^{\lambda},f^{\lambda})$.
So we can treat these terms as before.

It only remains to show that we can pass the limit in the coupling terms $\chi_{\lambda}(\u^{\lambda})f^{\lambda}$ and $\int_{\R^3} f^{\lambda}\,d\v 1_{\{|\u^{\lambda}|\leq\lambda}\}=\int_{\R^3} f^{\lambda} \,d\v\chi_{\lambda}(\u^{\lambda})$.
 Here, we treat these terms
 as follows
 \begin{equation}
 \label{a1}
 \int_{\R^3}f^{\lambda}\u^{\lambda}\,d\v1_{\{|\u^{\lambda}|\leq\lambda\}}=
 \int_{\R^3}f^{\lambda}\u^{\lambda}\,d\v-
 \int_{\R^3}f^{\lambda}\u^{\lambda}\,d\v1_{\{|\u^{\lambda}|>\lambda\}},
 \end{equation}
 and for the second term in \eqref{a1},
\begin{equation}
\begin{split}
\label{a2}
& \|\int_{\R^3}f^{\lambda}\u^{\lambda}\,d\v1_{\{|\u^{\lambda}|>\lambda\}}\|_{L^{1}(0,T;\O)}
\\ & \leq \|
 \int_{\R^3}f^{\lambda}\,d\v\|_{L^{\infty}(0,T;L^2(\O)}\|\u^{\lambda}\|_{L^{2}(0,T;L^{6}(\O))}
 \|1_{\{|\u^{\lambda}|>\lambda\}}\|_{L^{2}(0,T;L^{6}(\O))}
\\&\leq\|\int_{\R^3}f^{\lambda}d\v\|_{L^{\infty}(0,T;L^{2}(\O)}\|\u^{\lambda}\|_{L^{2}(0,T;L^{6}(\O))}(\frac{\|\u^{\lambda}\|_{L^{2}(0,T;L^{6}(\O))}}{\lambda})
 \\& \leq\frac{C}{\lambda}\to 0
 \end{split}
 \end{equation}
 as $\lambda\to\infty,$
  where we used Sobolev embedding theorem.

   On the other hand, we have
\begin{equation*}
\partial_t(\int_{\R^3}f^{\lambda}\,d\v)+\Dv_{x}(\int_{\R^3}\v f^{\lambda}\,d\v)=0,
\end{equation*}
which implies that
$ \partial_t(\int_{\R^3}f^{\lambda}\,d\v)$ is bounded in $L^2(0,T;H^{-1})$.
This, with the help of $\nabla\u^{\lambda}$ bounded in $L^2((0,T)\times\O)$, yields that
\begin{equation}
\label{a3}
\u^{\lambda}\left(\int_{\R^3}f^{\lambda}\,d\v\right)\to\u\left(\int_{\R^3}f\,d\v\right)\;\;\text{as } \lambda\to\infty
\end{equation}
in the sense of distributions.

By \eqref{a1}-\eqref{a3}, one deduces that
\begin{equation*}
 \u^{\lambda}\left(\int_{\R^3}f^{\lambda}\,d\v1_{\{|\u^{\lambda}|\leq\lambda\}}\right)\to \u\left(\int_{\R^3}f\, d\v\right)\;\;\text{as } \lambda\to\infty
\end{equation*}
  in the sense of distributions.
Thus
we can pass the limit in the weak solutions of \eqref{1.1} as $\lambda\to\infty$.
We remark that the solution $(\u^{\lambda},f^{\lambda})$ satisfies the energy inequality in Proposition \ref{P3}:
  \begin{equation*}
\begin{split}
&\int_{\O}\frac{1}{2}|\u^{\lambda}|^2\,dx+\int_{\O\times\R^3}f^{\lambda}(1+|\v|^2)\,d\v\, dx\\&+\int_0^T\int_{\O}|\nabla\u^{\lambda}|^2\,dx\,dt
+\int_0^T\int_{\O\times\R^3}f^{\lambda}|\chi_{\lambda}(\u^{\lambda})-\v|^2d\v \,dx\,dt
\\&\leq\int_{\O}\frac{1}{2}|\u_0|^2\,dx+\int_{\O\times\R^3}f_0(1+|\v|^2)\,d\v\, dx.
\end{split}
\end{equation*}

Using the same approach as in last subsection, letting $\lambda$ go to infinity,  using the convexity of the energy and the weak convergence of $f^{\lambda}$ and $\u^{\lambda}$, we deduce
  \begin{equation*}
\begin{split}
&\int_{\O}\frac{1}{2}|\u|^2\,dx+\int_{\O\times\R^3}f(1+|\v|^2)\,d\v\, dx\\&+\int_0^T\int_{\O}|\nabla\u|^2\,dx\,dt
+\int_0^T\int_{\O\times\R^3}f|\u-\v|^2\,d\v \,dx\,dt
\\&\leq\int_{\O}\frac{1}{2}|\u_0|^2\,dx+\int_{\O\times\R^3}f_0(1+|\v|^2)\,d\v\, dx.
\end{split}
\end{equation*}
So we have proved Theorem \ref{T1}.

\section{Uniqueness in The Two Dimensional Space}
The goal of this section is to establish the uniqueness of global solutions in the two dimensional space. For that purpose, we shall study the regularity first.
\subsection{Regularity}
The existence of global weak solution to \eqref{1.1} was obtained by Theorem \ref{T1}.
We multiply the first equation of \eqref{1.1} by $\u_{t}$ and use integration by parts over $\O$ to obtain,
\begin{equation*}
\begin{split}
&\int_{\O}|\u_{t}|^2dx+\partial_{t}\int_{\O}|\nabla\u|^2\,dx \\&\leq\int_{\O}|m_{0}f\cdot\u\cdot\u_{t}|\,dx+\int_{\O}|\u\cdot\nabla\u\cdot\u_{t}|\,dx
+\int_{\O}|m_{1}f||\u_{t}|\,dx.
\end{split}
\end{equation*}
By the Cauchy-Schwarz inequality, we deduce
\begin{equation}
\label{5.2}
\begin{split}
& \|\frac{\partial\u}{\partial t}\|_{L^{2}(\O)}^2+\frac{d}{dt}\|\nabla\u \|_{L^{2}(\O)}^2
\\& \leq \|m_{0}f\|_{L^{4}(\O)}\|\u\|_{L^{4}(\O)}\|\u_{t}\|_{L^{2}(\O)}+
\|\u_{t}\|_{L^{2}(\O)}\|\u\cdot\nabla\u\|_{L^{2}(\O)}
\\&+\|m_{1}f\|_{L^{2}(\O)}\|\u_{t}\|_{L^{2}(\O)}.
\end{split}
\end{equation}
By Theorem \ref{T1}, we have $$\u \in L^{\infty}(0,T;L^2(\O)), \nabla\u\in L^{2}(0,T;L^{2}(\O)).$$
Using the Gagliardo-Nirenberg inequality
\begin{equation}
\label{5.3}
\|v\|_{L^{4}}\leq C\|v\|_{L^2}^{1/2}\|\nabla v\|_{L^2}^{1/2},
\end{equation}
one obtains that
\begin{equation}
\label{5.4}
\int_{0}^{T}\int_{\O}|\u|^4\,dx\,dt\leq C\|\u\|_{L^{\infty}(0,T;L^{2}(\O))}^2\|\nabla\u\|_{L^{2}(0,T;L^{2}(\O))}^2\leq C.
\end{equation}
Since $\u \in L^{2}(0,T;H^1_0(\O))$, by the Sobolev imbedding inequality, we obtain $\u \in L^{2}(0,T;L^{p}(\O))$ for any $1\leq p< \infty.$

Thanks to Lemma \ref{L2} with $d=2$, we have
\begin{equation*}
M_{6}f<\infty \;\;\text{ for any } 0\leq t\leq T,
\end{equation*}
if $m_{6}f_{0}<\infty.$

Let us estimate $m_{0}f$ in the two dimensional space:
\begin{equation*}
\begin{split}
m_{0}f&=\int_{\R^2}f\,d\v=\int_{|\v|<r}f\, d\v+\int_{|\v|\geq r}f\,d\v
\\& \leq C\|f\|_{L^{\infty}}r^{2}+\frac{1}{r^k}\int_{|\v|\geq r}|\v|^k f \,d\v
\end{split}
\end{equation*}
for all $k\geq 0.$
Letting $r=(\int_{\R^2}|\v|^k f d\v)^{\frac{1}{k+2}}$, then
\begin{equation*}
m_{0}f\leq C(\|f\|_{L^{\infty}}+1)(\int_{\R^2}|\v|^k f \,d\v)^{\frac{2}{k+2}}
\end{equation*}for all $k\geq0.$
Taking $k=6,$ then $m_{0}f\leq C(m_{6}f)^{1/4},$ which means
\begin{equation}
\label{5.5}
\|m_{0}f\|_{L^{\infty}(0,T;L^{4}(\O))}< \infty.
\end{equation}
Similarly, we have
\begin{equation}
\label{5.6}
\|m_{1}f\|_{L^{\infty}(0,T;L^{2}(\O))}< \infty.
\end{equation}
By \eqref{5.2}-\eqref{5.6}, we have for all $\varepsilon>0$
\begin{equation}
\label{5.7}
\|\frac{\partial\u}{\partial t}\|_{L^{2}(\O)}^2+\frac{d}{dt}\|\nabla\u\|_{L^2(\O)}^2\leq \frac{C(t)}{\varepsilon}(1+\|\nabla\u\|_{L^{2}(\O)}^2)+\varepsilon\|D^2\u\|_{L^{2}(\O)}^2
\end{equation}
where $C(t)\geq 0$, and $\int_{0}^{T}C(t)dt\leq C$ for all $T>0.$

Next, observe that for all $t\geq 0$ in view of \eqref{1.1}, we have
\begin{equation*}
\begin{split}
&\quad\quad\quad\|-\D\u+\nabla p\|_{L^{2}(\O)}
\\& \leq C\left(\|m_{1}f\|_{L^2(\O)}+\|\frac{\partial \u}{\partial t}\|_{L^2(\O)}+\||\u||\nabla\u|\|_{L^2(\O)}+\|m_{0}f\cdot\u\|_{L^2(\O)}    \right).
\end{split}
\end{equation*}
Due to $\Dv \u=0,$ by the classical regularity on Stokes equations, we obtain
\begin{equation*}
\begin{split}
\quad\quad\quad\quad &\|\u\|_{H^2(\O)}
\\ &\leq  C\left(\|m_{1}f\|_{L^2(\O)}+\|\frac{\partial \u}{\partial t}\|_{L^2(\O)}+\||\u||\nabla\u|\|_{L^2(\O)}+\|m_{0}f\cdot\u\|_{L^2(\O)}    \right).
\end{split}
\end{equation*}
Following the same argument of \eqref{5.7}, we have for all $\varepsilon'>0$,
\begin{equation}
\label{5.8}
\|\u\|_{H^2(\O)}
\leq  \frac{1}{\varepsilon'}C_{1}(t)+C\|\frac{\partial \u}{\partial t}\|_{L^2{(\O)}}+\varepsilon'\|\u\|_{H^{2}(\O)}
\end{equation}
where $C_{1}(t)\geq 0$, and $\int_{0}^{T}C_{1}^2(t)dt\leq C.$
Choosing $\varepsilon'=\frac{1}{2},$ we obtain
\begin{equation}
\label{5.9}
\|\u\|_{H^{2}(\O)}^2\leq C_{2}(t)+C\|\frac{\partial \u}{\partial t}\|_{L^2(\O)}
\end{equation}
where $C_{2}(t)\geq 0$, and $\int_{0}^{T}C_{2}^2(t)dt\leq C.$
Inserting \eqref{5.9} in \eqref{5.7} and choose $\varepsilon=1/2C,$ we obtain for all $t\geq0$
\begin{equation}
\label{uniqueness1+}
\|\frac{\partial \u}{\partial t}\|_{L^2(\O)}^2
+\frac{d}{d t}\|\nabla\u\|_{L^{2}(\O)}^2\leq C_{3}(t)(1+\|\nabla\u\|_{L^{2}(\O)}^2)
\end{equation}
where $C_{3}(t)\geq 0$, and $\int_{0}^{T}C_{3}^2(t)dt\leq C.$

Applying Gronwall's inequality to  \eqref{uniqueness1+}, we obtain
\begin{equation*} \frac{\partial \u}{\partial t} \quad \text{ is bounded in }L^{2}(\O\times(0,T)),
\end{equation*}
and
\begin{equation*}
\u \quad\text{is bounded in }L^{\infty}(0,T;H^{1}_0(\O)).
\end{equation*}
This, with the help \eqref{5.9}, implies that
\begin{equation*}
\u \;\;\text{ is bounded in } L^2(0,T;H^2(\O)).
\end{equation*}
Here, we need to rely on the following Lemma which is a very special case of interpolation theorem of Lions-Magenes. We refer the readers to \cite{T} for the proof of this lemma.
\begin{Lemma}
\label{L5.1}
Let $V\subset H\subset V'$ be three Hilbert spaces, $V'$ is a dual space of $V$. If a function $\u$ belong to $L^{2}(0,T;V)$ and its derivative $\u'$ belongs to $L^2(0,T;V')$ then $\u$ is almost everywhere equal to a function continuous from $[0,T]$ into $H$.
\end{Lemma}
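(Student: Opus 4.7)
The plan is to prove this interpolation lemma in three steps: first establish continuity of $\u$ in the weaker space $V'$, then derive a chain rule identity for $\|\u(t)\|_H^2$ via time mollification, and finally upgrade weak continuity in $H$ to strong continuity. For the first step, since $\u' \in L^2(0,T;V') \subset L^1(0,T;V')$, the $V'$-valued Bochner integral $t \mapsto \int_0^t \u'(s)\,ds$ is continuous; the fundamental theorem of calculus then shows that $\u$ agrees almost everywhere with a continuous $V'$-valued function, and in particular the traces $\u(0), \u(T) \in V'$ are well-defined.

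Next I would extend $\u$ by reflection across $0$ and $T$ to a function on $[-T,2T]$ lying in $L^2_{\rm loc}(\mathbb{R};V)$ whose distributional derivative lies in $L^2_{\rm loc}(\mathbb{R};V')$; the $V'$-continuity from the first step is what guarantees that the reflection has the claimed regularity. Let $\rho_\epsilon$ be a standard time mollifier and set $\u_\epsilon = \rho_\epsilon * \u$, which is smooth in $t$ with values in $V$. For these smooth approximants the classical chain rule yields
\begin{equation*}
\tfrac{d}{dt}\|\u_\epsilon(t)\|_H^2 = 2(\u_\epsilon'(t), \u_\epsilon(t))_H = 2\langle \u_\epsilon'(t), \u_\epsilon(t)\rangle_{V',V},
\end{equation*}
where the second equality uses the Gelfand triple structure $V \hookrightarrow H \hookrightarrow V'$. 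Integrating and letting $\epsilon \to 0$, with $\u_\epsilon \to \u$ in $L^2(0,T;V)$ and $\u_\epsilon' \to \u'$ in $L^2(0,T;V')$, we obtain for a.e. $s,t \in [0,T]$
\begin{equation*}
\|\u(t)\|_H^2 - \|\u(s)\|_H^2 = 2\int_s^t \langle \u'(r), \u(r)\rangle_{V',V}\,dr.
\end{equation*}
The right-hand side is absolutely continuous in $t$, so $\|\u(\cdot)\|_H^2$ admits a continuous representative and in particular $\|\u(\cdot)\|_H$ is essentially bounded.

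Finally I would combine $V'$-continuity with essential boundedness in $H$ to obtain weak continuity of $\u:[0,T]\to H$: for any $\varphi \in H$ approximated by $\varphi_n \in V$ in $H$, the map $t \mapsto \langle \u(t),\varphi_n\rangle_{V',V}$ is continuous, while the error $|\langle \u(t),\varphi_n-\varphi\rangle|$ is controlled uniformly in $t$ by $\|\u\|_{L^\infty(0,T;H)}\|\varphi_n-\varphi\|_H$. Weak continuity into $H$ together with continuity of the norm $\|\u(t)\|_H$ yields strong continuity, since in a Hilbert space weak convergence combined with convergence of norms implies strong convergence. The main obstacle will be making the mollification argument fully rigorous near the endpoints $t=0$ and $t=T$: the reflected extension must be verified to have the correct regularity in both $V$ and $V'$ across these points, and the identification $(\u_\epsilon'(t),\u_\epsilon(t))_H = \langle\u_\epsilon'(t),\u_\epsilon(t)\rangle_{V',V}$ must survive the limit $\epsilon \to 0$ even though the two pairings a priori live in different ambient spaces.
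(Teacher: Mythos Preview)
Your argument is correct and follows the standard proof of the Lions--Magenes interpolation lemma: continuity in $V'$ from the fundamental theorem, the chain rule $\frac{d}{dt}\|\u\|_H^2 = 2\langle \u',\u\rangle_{V',V}$ via mollification, and the upgrade from weak to strong continuity in $H$ using continuity of the norm. The endpoint issue you flag is real but routine, and your reflection-plus-mollification outline handles it.

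However, you should know that the paper does not actually prove this lemma. Immediately before stating it, the author writes that it is ``a very special case of interpolation theorem of Lions--Magenes'' and refers the reader to Temam \cite{T} for the proof. So there is no ``paper's own proof'' to compare against; the lemma is quoted as a black box. What you have written is essentially the proof one finds in Temam (or in Evans, or in Lions--Magenes), so in that sense your approach coincides with the cited reference rather than diverging from it.
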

Thanks to
\begin{equation*}
\frac{\partial \u}{\partial t} \in L^{2}(\O\times(0,T)),\text{ and } \u \in L^{\infty}(0,T;H^{1}(\O) \cap L^2(0,T;H^2(\O)),
\end{equation*}
we conclude that $\u\in C([0,T],H^1(\O))$ by Lemma \ref{L5.1}, consequently $f\in C^1([0,T];L^{\infty}(\R^2\times\O)).$
\subsection{Uniqueness of solutions}

To show the uniqueness, we rely on the following parabolic regularity due to \cite{E,SA,T}:

\begin{Lemma}
\label{L4.2}
If $\u$ solves
\begin{equation*}
\begin{split}
&\partial_t\u-\D\u+\nabla p=F,
\\& \u(t=0)=\u_0,\;\;\;\u|_{\partial\O}=0,
\end{split}
\end{equation*}
on some time interval $(0,T),$ then we have
\begin{equation*}
\|\u\|_{L^{\infty}([0,T);H^1_0)\cap L^2(0,T;H^2)}\leq C\left(\|F\|_{L^2((0,T)\times\O)}+\|\u_0\|_{H^1_0}\right).
\end{equation*}
\end{Lemma}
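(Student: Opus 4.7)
The plan is to eliminate the pressure via the Leray projection and then test the resulting Stokes evolution equation against the Stokes operator applied to $\u$; the spatial regularity then follows from stationary elliptic (Stokes) regularity applied pointwise in time.

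Concretely, let $P$ denote the Leray projector onto divergence-free vector fields with zero normal trace, and write $A=-P\Delta$ for the Stokes operator acting on $D(A)=H^2(\O)\cap H^1_0(\O)\cap\{\Dv\u=0\}$. Since $P\nabla p=0$, applying $P$ to the equation gives
\begin{equation*}
\partial_t\u+A\u=PF,\qquad \u(0)=\u_0,
\end{equation*}
with $\u_0\in H^1_0$ divergence-free (this hypothesis is implicit from the Stokes context in which the lemma is applied). I would now test this equation with $A\u$; by the standard integration by parts for the Stokes operator,
\begin{equation*}
\tfrac{1}{2}\tfrac{d}{dt}\|\nabla \u\|_{L^2}^2+\|A\u\|_{L^2}^2=\langle PF,A\u\rangle\leq \tfrac{1}{2}\|F\|_{L^2}^2+\tfrac{1}{2}\|A\u\|_{L^2}^2,
\end{equation*}
using that $\|A^{1/2}\u\|_{L^2}=\|\nabla\u\|_{L^2}$ on divergence-free fields with zero Dirichlet trace, and that $P$ is an $L^2$-contraction. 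Absorbing the last term on the right and integrating in time from $0$ to any $t\le T$ yields
\begin{equation*}
\sup_{s\in[0,t]}\|\nabla\u(s)\|_{L^2}^2+\int_0^t\|A\u\|_{L^2}^2\,ds\le \|\nabla\u_0\|_{L^2}^2+\|F\|_{L^2((0,T)\times\O)}^2,
\end{equation*}
which is exactly the $L^\infty_t H^1_0$ part of the claim together with $L^2_t$ control of $A\u$.

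To convert the $L^2_tL^2_x$-bound on $A\u$ into the $L^2(0,T;H^2)$-bound on $\u$, I would invoke the stationary Stokes regularity: for fixed $t$, the pair $(\u(t),p(t))$ solves the stationary Stokes system $-\Delta\u+\nabla p=-A\u(t)-\partial_t\u(t)+F(t)$ (or directly $A\u(t)=g(t)\in L^2$), and on a sufficiently regular bounded domain $\O$ this yields $\|\u(t)\|_{H^2}\le C\|A\u(t)\|_{L^2}$. Squaring and integrating in $t$ closes the argument.

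The only technical obstacle is justifying the test against $A\u$ rigorously, since a priori $\u$ is only a weak solution and $A\u$ may not be admissible. The standard remedy is to first carry out the computation on Galerkin projections $\u_N$ built from the eigenbasis $\{\phi_i\}$ of $A$ (already introduced in Section~3), where every manipulation is finite-dimensional and legal, obtain the uniform estimate in $N$, and then pass to the limit using weak-$*$ and weak lower semicontinuity; the limit inherits the inequality. A minor point is that the domain must be regular enough for stationary Stokes $H^2$-regularity to hold, which is implicitly assumed throughout the paper. With these ingredients in place, the estimate follows with a constant $C$ depending only on $T$ and $\O$, as claimed.
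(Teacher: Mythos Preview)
The paper does not actually prove this lemma; it is stated as a citation of standard parabolic Stokes regularity (references \cite{E,SA,T} in the paper) and used as a black box. Your argument---Leray-project, test against $A\u$, absorb via Young's inequality, then upgrade $\|A\u\|_{L^2}$ to $\|\u\|_{H^2}$ by stationary Stokes (Cattabriga) regularity, with the Galerkin justification---is precisely the textbook proof one finds in those references, and it is correct under the implicit smoothness assumption on $\partial\O$. There is nothing to compare: your proposal supplies the details the paper deliberately omits.
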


Now we are ready to show the uniqueness. Let $(\u_1,f_1)$ and $(\u_2,f_2)$ be two different solutions to \eqref{1.1}-\eqref{boundary condition}. Let $\bar{\u}=\u_1-\u_2$, and $\bar{f}=f_1-f_2$, then we have the following equations:
\begin{equation}
\label{5.10}
\begin{split}
&\bar{\u}_t+\nabla p-\D \bar{\u}=-\int_{\R^2}(\bar{\u} f_{1}+\u_{2}\bar{f}-\v \bar{f})\,d\v-(\bar{\u}\cdot\nabla\u_1+\u_2\cdot\nabla\bar{\u})
\\&\Dv\bar{\u}=0,
\\&\bar{f}_t+\v\cdot\nabla_x\bar{f}+\Dv_{\v}(\bar{\u} f_1+\u_2\bar{f}-\v \bar{f})=0
\end{split}
\end{equation}
in $\O\times\R^2\times(0,T),$ subject to the following initial data
\begin{equation*}
\bar{\u}(0,x)=0,\quad\quad \bar{f}(0,x,\v)=0,
\end{equation*}
and boundary condition
\begin{equation*}
\bar{\u}|_{\partial\O}=0,\;\;\;\bar{f}(t,x,\v^*)=\bar{f}(t,x,\v)\;\;\text{ for }x\in\partial\O, \;\v\cdot\nu(x)<0.
\end{equation*}

Here, we denote the space $X= L^{\infty}(0,T;H^1_0)\cap L^{2}(0,T;H^2_0).$
Applying Lemma \ref{L4.2} with $\u_0=0$, we have the following regularity:
\begin{equation}
\label{5.11}
\begin{split}
&\|\bar{\u}\|_{X}\leq C\|\int_{\R^2}(\bar{\u} f_{1}+\u_{2}\bar{f}-\v \bar{f})d\v+(\bar{\u}\cdot\nabla\u_1+\u_2\cdot\nabla\bar{\u})\|_{L^2((0,T)\times\O)}
\\&=J_1+J_2.
\end{split}
\end{equation}

For $J_1$:
\begin{equation}
\label{5.12}
\begin{split}
&\|\int_{\R^2}(\bar{\u} f_{1}+\u_{2}\bar{f}-\v \bar{f})d\v\|_{L^{2}((0,T)\times\O)}
\\&=\|\bar{\u} m_0f_1+\u_2 m_0\bar{f}-m_1\bar{f}\|_{L^2((0,T)\times\O)}
\\ & \leq \|\bar{\u}\|_{L^{\infty}(0,T;L^{p_1}(\O))}\|m_0f_1\|_{L^{2}(0,T;L^{3}(\O))}+ \|1\|_{L^6(0,T;L^x(\O))}\|\u_2\|_{L^{\infty}(0,T;L^{p_{2}}(\O))}\|m_0\bar{f}\|_{L^{3}((0,T)\times\O)}
\\& +\|m_1\bar{f}\|_{L^{2}((0,T)\times\O)}
\\& \leq \sup_{t\in[0,T]}\|m_0f_1\|_{L^{3}(\O)}T^{\frac{1}{2}}\|\bar{\u}\|_{X}+
CT^{\frac{1}{6}}\|\u_2\|_{L^{\infty}(0,T;L^{p}(\O))}\|m_0\bar{f}\|_{L^{\infty}(0,T;L^{3}(\O))}
\\&+\|m_1\bar{f}\|_{L^{2}((0,T)\times\O)},
\end{split}
\end{equation}
where $p_1=6$, $x=\frac{6p_{2}}{p_{2}-6},$ for any $p_{2}>6,$ $C$ depends on the domain $\O$.
And for $J_2$:
\begin{equation}
\label{5.13}
\begin{split}
&\|\bar{\u}\cdot\nabla\u_1+\u_2\cdot\nabla\bar{\u}\|_{L^2((0,T)\times\O)}
\\ & \leq\|1\|_{L^{4}(0,T;L^y(\O))}\|\bar{\u}\|_{L^{\infty}(0,T;L^p(\O))}\|\nabla\u_1\|_{L^4((0,T)\times\O)}
\\&+\|1\|_{L^4(0,T;L^{\infty}(\O))}\|\u_2\|_{L^{4}(0,T;L^{\infty}(\O))}\|\nabla\bar{\u}\|_{L^{\infty}(0,T;L^2(\O))}
\\&\leq CT^{\frac{1}{4}}\|\nabla\u_1\|_{L^4((0,T)\times\O)}\|\u\|_{X}+
CT^{\frac{1}{4}}\|\u_2\|_{L^{4}(0,T;L^{\infty}(\O))}\|\u\|_{X},
\end{split}
\end{equation}
where $y=\frac{4p}{p-4}$ for any $p>4$, $C$ only depends on the domain $\O$, and we used the Gagliardo-Nirenberg inequality for $\nabla\u_1$, the Gagliardo-Nirenberg inequality and embedding inequality for $\u_2$.
By \eqref{5.11}-\eqref{5.13}, we can choose small $T$ such that
\begin{equation}
\label{chooseT}
\begin{split}&\sup_{t\in[0,T]}\|m_0f_1\|_{L^{3}(\O)}T^{\frac{1}{2}}+C\|\u_2\|_{L^{\infty}(0,T;L^{p}(\O))}T^{\frac{1}{6}}
\\&+CT^{\frac{1}{4}}\|\nabla\u_1\|_{L^4((0,T)\times\O)}+CT^{\frac{1}{4}}\|\u_2\|_{L^{4}(0,T;L^{\infty}(\O))}\leq \frac{1}{2},
\end{split}
\end{equation}
then
\begin{equation}
\label{5.14}
\|\bar{\u}\|_{X}\leq \|m_0\bar{f}\|_{L^{\infty}(0,T;L^3(\O))}+\|m_1\bar{f}\|_{L^{2}((0,T)\times\O)}.
\end{equation}
The next step is to show $\|m_0\bar{f}\|_{L^{3}((0,T)\times\O)}+\|m_1\bar{f}\|_{L^{2}((0,T)\times\O)}$ can be controlled by $\|\u\|_{X}$.

In the two dimensional space, we have $$\int_0^T\|m_{0}\bar{f}\|_{L^3(\O)}^3\,dt\leq C\int_0^T\int_{\O}\int_{\R^2}|\v|^4\bar{f} \,d\v \,dx\,dt,$$
and $$\int_0^T\|m_{1}\bar{f}\|_{L^2(\O)}^2\,dt\leq C\int_0^T\int_{\O}\int_{\R^2}|\v|^4 \bar{f}\, d\v\, dx\,dt.$$
Pluging them into \eqref{5.14}, and choosing $T$ small enough again, one deduces that
\begin{equation}
\label{5.15}
\|\bar{\u}\|_X\leq C\int_0^T\int_{\O}\int_{\R^2}|\v|^4 \bar{f} d\v\,dx\,dt.
\end{equation}

We multiply the second equation of \eqref{5.10} by $|\v|^k$ for $k\geq1,$ and use integration by parts over $\O\times\R^2$:
\begin{equation}
\label{5.16}
\begin{split}
&\partial_{t}\int_{\O}\int_{\R^2} \bar{f}|\v|^k\,d\v\, dx+k\int_{\O}\int_{\R^2} \bar{f}|\v|^k\, d\v\, dx
\\&\quad\quad\quad\quad\quad\quad=k \int_{\O}\int_{\R^2}\bar{\u} f_{1}|\v|^{k-1}\,d\v\, dx+k\int_{\O}\int_{\R^2}\u_{2}\bar{f} |\v|^{k-1}\,d\v\, dx.
\end{split}
\end{equation}
We estimate the right hand side terms of \eqref{5.16}:
\begin{equation}
\label{5.17}
\begin{split}
&k \int_{\O}\int_{\R^2}\bar{\u} f_{1}|\v|^{k-1}d\v dx+k\int_{\O}\int_{\R^2}\u_{2}\bar{f} |\v|^{k-1}\,d\v\, dx
\\ & \leq C \int_{\O}|\bar{\u}||m_{k-1}f_1|\,dx+C\int_{\O}|\u_{2}||m_{k-1}\bar{f}|\,dx
\\ & \leq C\|\bar{\u}\|_{L^2(\O)}\|m_{k-1}f_1\|_{L^2(\O)}+C\|\u_{2}\|_{H^2_0(\O)}\|m_{k-1}\bar{f}\|_{L^1(\O)}.
\end{split}
\end{equation}
By \eqref{5.16} and \eqref{5.17}, we have
\begin{equation}
\begin{split}
\label{5.18}
&\sup_{t\in(0,T)}\int_{\O}\int_{\R^2} \bar{f}|\v|^k\,d\v\, dx+k\int_{0}^T\int_{\O}\int_{\R^2} \bar{f}|\v|^k\, d\v\, dx\, dt
\\ & \leq \int_0^T\|\bar{\u}\|_{L^2(\O)}\|m_{k-1}f_1\|_{L^2(\O)}dt+\int_0^T\|\u_2\|_{H^2_0(\O)}\|m_{k-1}\bar{f}\|_{L^1(\O)}dt
\\& \leq \|\bar{\u}\|_{L^2((0,T)\times\O)}\|m_{k-1}f_1\|_{L^2(\O)}T^{\frac{1}{2}}+
\|\u_2\|_{L^2(0,T;H^2_0(\O))}T^{\frac{1}{2}}\sup_{t\in[0,T]}\int_{\O}\int_{\R^2}|\v|^{k-1}\bar{f}\,d\v\,dx.
\end{split}
\end{equation}
for all $k\geq1.$
Meanwhile, we integrate the third equation in \eqref{5.10} over $\O\times\R^2$ and use integration by parts:
\begin{equation}
\label{5.19}
\int_{\O}\int_{\R^2} \bar{f}\,d\v\, dx=\int_{\O}\int_{\R^2} \bar{f}_{0}\,d\v \,dx=0.
\end{equation}
Using \eqref{5.18}-\eqref{5.19} and by induction, we deduce
\begin{equation}
\label{5.20}
\begin{split}
&\int_0^T\int_{\O}\int_{\R^2} \bar{f}|\v|^{4}\,d\v \,dx\,dt
\leq (\sup_{t\in[0,T]}\|m_3f_1\|_{L^2(\O)}T^{\frac{1}{2}}+\|\u_2\|_{X}\sup_{t\in[0,T]}\|m_2f_1\|_{L^2(\O)}T
\\&+\|\u_2\|_{X}^2\sup_{t\in[0,T]}\|m_1f_1\|_{L^2(\O)}T^{\frac{3}{2}}
+\|\u_2\|_{X}^3\sup_{t\in[0,T]}\|m_0f_1\|_{L^2(\O)}T^2)\|\bar{\u}\|_{X}.
\end{split}
\end{equation}
Thanks to \eqref{5.20} and \eqref{5.15}, choosing $T>0$ small enough, we obtain:
\begin{equation*}
\|\bar{\u}\|_{X}\leq \frac{1}{2}\|\bar{\u}\|_{X}.
\end{equation*}
Thus, $\bar{\u}=0$, and hence $\u_1=\u_2$ on a small time interval. Thus, we have proved the uniqueness of $\u$ on a small time interval.

On the other hand, we have the following equation on the same time interval,
\begin{equation}
\label{5.21}
\bar{f}_{t}+\v\nabla \bar{f}+\Dv_{\v}((\u_2-\v)\bar{f})=0\;\; \text{ in } \O\times\R^2\times(0,T],
\end{equation}
with its initial data $$\bar{f}_{0}=0,$$
and boundary condition $$\bar{f}(t,x,\v)=\bar{f}(t,x,\v^*)\;\;\text{ for } \,x\in\partial\O,\,\v\cdot\nu(x)<0.$$
By \eqref{5.21}, we have
\begin{equation*}
\|\bar{f}\|_{L^{\infty}((0,T]\times\O\times\R^2)}\leq C\|\bar{f}_{0}\|_{L^{\infty}((0,T]\times\O\times\R^2)},
\end{equation*}
which yields $f_1=f_2.$
So we have proved the uniqueness of solution $(\u,f)$ on a small time interval $[0,T_0].$ For any given $T>0$, we consider the maximal interval of the uniqueness, $T_1=\sup T_0\leq T,$ such that the solution is unique on $[0,T_0].$ The main goal is to prove that $T_1$ can be taken to be equal to $\infty.$ For any given $T_0>0,$ we use
\begin{equation*}
\bar{\u}(T_0,x)=0,\;\;\bar{f}(T_0,x,\v)=0,
\end{equation*}as the new initial data to \eqref{5.10}.
Applying the same argument to equation \eqref{5.10} with the new data, the uniqueness of solution can be extended to $[0,T_0+T^{*}]$ for a small number $T^{*}>0$.

By \eqref{chooseT} and \eqref{5.20}, $T^*$ can be chosen only depending on the upper bounds of $$\sup_{t\in[0,T]}\|m_0f_1\|_{L^{3}(\O)},\;\;\|\u_1\|_{X},\;\;\|\u_2\|_{X},\;\;\sup_{t\in[0,T]}\|m_if_1\|_{L^2(\O)}\;\text{ for } i=0,1,2,3.$$
 By the regularity of $\u$ in Section 4.1, $\u_1,$ $\u_2$ are uniformly bounded in the space $X$. Applying the same argument of \eqref{5.5} and Lemma \ref{L2}, we can show that the other terms are uniformly bounded for all time $t\geq 0$.
All such terms are uniformly bounded for all time $t\geq0$.
Thus, $T^*$ can be chosen not depending on the initial data at time $T_0$. In fact, we can choose $T^*=T_0$.
One can then repeat the argument many times and obtain the uniqueness of $(\u,f)$ on the whole time.

\bigskip

%%%%%%%%%%%%%%%%%%%%%%%%%%

\section*{Acknowledgments}

The author thanks Professor Marshall Slemrod and Professor Dehua Wang for valuable discussions
and suggestions. He is also indebted to the anonymous referees for improving this article through their comments.
\bigskip\bigskip

\end{document}